\newtheorem{theorem}{Theorem}[section]
\newtheorem{lemma}[theorem]{Lemma}
\theoremstyle{definition}
\newtheorem{definition}[theorem]{Definition}
\newtheorem{example}{Example}[section]
\newtheorem{remark}[theorem]{Remark}
\begin{document}

\title{Generalized Fuzzy Euler--Lagrange equations and transversality conditions}

\author{O.S. Fard, R. Almeida, J. Soolaki, A.H. Borzabadi}

\address{Omid Solaymani Fard\newline
\indent School of Mathematics and Computer Science, Damghan University, Damghan, Iran}
\email{osfard@du.ac.ir, omidsfard@gmail.com}

\address{Ricardo Almeida\newline
\indent Center for Research and Development in Mathematics and Applications (CIDMA)\\
Department of Mathematics, University of Aveiro, 3810--193 Aveiro, Portugal}
\email{ricardo.almeida@ua.pt}

\address{Javad Soolaki \newline
\indent Department of Mathematics, Damghan University, Damghan, Iran}
\email{javad.soolaki@gmail.com}

\address{Akbar Hashemi Borzabadi\newline
\indent School of Mathematics and Computer Science, Damghan University, Damghan, Iran}
\email{borzabadi@du.ac.ir}


\subjclass[2010]{93C42 (Primary); 26A33, 34A07, 34A08, 93D05 (Secondary).}

\keywords{Fuzzy fractional variational problems, Fuzzy fractional Euler--Lagrange conditions, Fuzzy Liouville--Caputo derivative.}


\begin{abstract}
The study of fuzzy fractional variational problems in terms of a fractional Liouville--Caputo derivative is introduced. Necessary  optimality conditions for problems of the fuzzy fractional calculus of variations with  free end--points are proved, as well as transversality conditions.
\end{abstract}

\maketitle


\section{Introduction}

 Fractional calculus is one of the generalizations of classical calculus and it has been used successfully in various fields
of science and engineering. The   real phenomena, such as dielectric and electrode-electrolyte polarization, electromagnetic waves, earthquakes, fluid dynamics, traffic, viscoelasticity and viscoplasticity, can be described successfully and more accurately using fractional models (see Kilbas et al. \cite{Kilbas}, Podlubny \cite{Podlubny}).

 Fractional variational calculus is one of the areas where considerable progress has been made. Riewe \cite{Riewe} was the first to initiate this
field. Afterward, Agrawal \cite{Agrawal} combined calculus of variations with fractional
derivatives to develop Euler--Lagrange equations for fractional variational problems. The generalized Euler--Lagrange equation	 and transversality conditions for fractional variational problems, in
terms of the Riemann--Liouville and the Liouville--Caputo derivatives, were presented in \cite{Agrawal1,Agrawal2} respectively. The necessary and sufficient optimality conditions for  the
fractional calculus of variations problems were also derived in \cite{Torres2,Torres,Torres1,Torres3}. The books written by  Malinowska et al. \cite{book:IFCV} and  Almeida et.al \cite{book:CMFCV}, and also
Odzijewicz et al. \cite{book:AMFCV}, give many useful techniques
for solving the fractional variational problems. \par
The fuzzy calculus of variations  extends
the classical variational calculus  considering  variables and their derivatives in fuzzy form. Although the notion of fuzzy set is widely spread to various problems such as   optimization,
differential equations and even fractional differential equations, very few works has been done to the fuzzy variational problems \cite{Fard,Fard1,Farhadinia}. Recently, Farhadinia \cite{Farhadinia} studied necessary optimality conditions for fuzzy variational problems using the fuzzy differentiability concept due to Buckley and Feuring \cite{Buckley}. Farhadinia's work was generalized by Fard et al. \cite{Fard,Fard1}. In \cite{Fard}  Fard and
Zadeh, using a $\alpha$--differentiability concept, obtained an extended fuzzy
Euler--Lagrange condition.  Fard et al.\cite{Fard1} presented the fuzzy Euler--Lagrange condition  for fuzzy constrained and unconstrained variational problems under the generalized Hukuhara differentiability.\par
Allahviranloo et al. in \cite{Allahviranloo2,Allahviranloo1}
have studied the concepts about generalized Hukuhara fractional Riemann--Liouville and
Liouville--Caputo differentiability of fuzzy--valued functions.  Fard et al.\cite{salehi} investigated the fuzzy fractional Euler--Lagrange equation  for  the following functional
\begin{equation}
\tilde{J}(\tilde{y})=\int_{a}^{b} \tilde{L}\left(x,\tilde{y}(x),_a^{C}\hspace{-0.10cm}D{_{x}^{\alpha}}\tilde{y}(x),_x^{C}\hspace{-0.10cm}D{_{b}^{\beta}}\tilde{y}(x)\right) dx.
\end{equation}
They considered the simple case where the lower bound  (or upper bound) of $\tilde{J}$      is stated in terms containing only $\underline{y}^r (x),$$_a^C~\hspace{-0.10cm}D_{x}^{\alpha}\underline{y}^r(x),$ and $_x^C~\hspace{-0.10cm}D_{b}^{\beta}\underline{y}^r(x)$ (or $\overline{y}^r (x),$ $_a^C~\hspace{-0.10cm}D_{x}^{\alpha}\overline{y}^r(x),$ and $_x^C~\hspace{-0.10cm}D_{b}^{\beta}\overline{y}^r(x)).$  In this article the fuzzy Euler--Lagrange
condition  in \cite{salehi} is generalized, such that both lower bound and upper bound   of the functional are considered to be in
terms containing $\underline{y}^r (x),_a^C~\hspace{-0.10cm}D_{x}^{\alpha}\underline{y}^r(x),_x^C~\hspace{-0.10cm}D_{b}^{\beta}\underline{y}^r(x),\overline{y}^r (x)$, $_a^C~\hspace{-0.10cm}D_{x}^{\alpha}\overline{y}^r(x)$ and $_x^C~\hspace{-0.10cm}D_{b}^{\beta}\overline{y}^r(x)$. Moreover, we develop the theory further
by proving the necessary optimality conditions for more general problems of the fuzzy fractional calculus of variations with a Lagrangian that may also depend on the unspecified end--points. We also  find the transversality conditions for when an end point lies on a given arbitrary
curve.  \par
The paper is organized as follows. Section \ref{1} presents some notations on the fuzzy numbers space, differentiability and integrability
of a fuzzy mapping.  The main results concerning the fuzzy Euler--Lagrange equation  for the FFVPs are established in Section \ref{3}.
Section \ref{4} presents the transversality condition for when
an end point lies on a given arbitrary curve, and finally, conclusions are discussed in Section \ref{6}.


\section{Preliminaries}
\label{1}
Let us denote by $\mathbb{R}_{f}$ the class of fuzzy numbers, i.e., normal, convex,
upper semicontinuous and compactly supported fuzzy subsets of the real numbers.
For $0 < r \leq 1$, let $[\tilde{u}]^{r}=\{{x} \in \mathbb{R}| \, \tilde{u}(x)\geq r\}$
and $[\tilde{u}]^{0}=\overline{\left\{{x} \in \mathbb{R}| \,\tilde{u}(x)\geq 0\right\}}$.
Then, it is well known that $[\tilde{u}]^{r}$ is a bounded closed interval
for any $r\in[0,1]$.
\begin{lemma}[See Theorem~1.1 of \cite{MR0825618} and Lemma~2.1 of \cite{MR2609258}]\label{lem1}
If $\underline{a}^r:[0,1] \rightarrow \mathbb{R}$ and
$\overline{a}^r:[0,1] \rightarrow \mathbb{R}$
satisfy the conditions
\begin{enumerate}
\item[(i)]  $\underline{a}^r:[0,1] \rightarrow \mathbb{R} $ is a bounded nondecreasing function,
\item[(ii)]$\overline{a}^r:[0,1] \rightarrow \mathbb{R} $ is a bounded nonincreasing function,
\item[(iii)]  $\underline{a}^1\leq \overline{a}^1$,
\item[(iv)] for $0<k\leq 1$, $\lim_{r\rightarrow k^{-}}\underline{a}^r =\underline{a}^k$
and $\lim_{r\rightarrow k^{-}}\overline{a}^r =\overline{a}^k$,
\item[(v)] $\lim_{r\rightarrow 0^{+}}\underline{a}^r =\underline{a}^0$
and $\lim_{r\rightarrow 0^{+}}\overline{a}^r =\overline{a}^0$,
\end{enumerate}
then $\tilde{a}:\mathbb{R}\rightarrow [0,1]$, characterized by
$\tilde{a}(t)=\sup\{r| \, \underline{a}^r\leq t \leq \overline{a}^r\}$,
is a fuzzy number with $[\tilde{a}]^{r}=[\underline{a} ^r,\overline{a}^r]$.
The converse is also true: if $\tilde{a}(t)=\sup\{r| \, \underline{a}^r\leq t\leq  \overline{a}^r\}$ is
a fuzzy number with parametrization given by $ [\tilde{a}]^{r}=[\underline{a} ^r,\overline{a}^r]$,
then functions $\underline{a}^r$ and $\overline{a}^r$ satisfy conditions (i)--(v).
\end{lemma}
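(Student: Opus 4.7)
The plan is to prove the two directions of the equivalence separately, since the statement is a biconditional between the conditions (i)--(v) on the pair of scalar functions $(\underline{a}^r,\overline{a}^r)$ and the property of $\tilde a$ being a fuzzy number with level sets $[\underline{a}^r,\overline{a}^r]$.

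For the forward direction, I would start from the given $\underline{a}^r,\overline{a}^r$ satisfying (i)--(v) and define $\tilde a(t)=\sup\{r:\underline{a}^r\le t\le\overline{a}^r\}$. The central task is to show $[\tilde a]^r=[\underline{a}^r,\overline{a}^r]$ for every $r\in(0,1]$; all the structural properties of a fuzzy number will then be easy consequences. The inclusion $[\underline{a}^r,\overline{a}^r]\subseteq[\tilde a]^r$ is immediate from the definition of the supremum. For the reverse inclusion, if $\tilde a(t)\ge r$, I would pick a sequence $r_n\nearrow s:=\tilde a(t)$ with $\underline{a}^{r_n}\le t\le \overline{a}^{r_n}$; by the left-continuity condition (iv), passing to the limit yields $\underline{a}^{s}\le t\le\overline{a}^{s}$, and then the monotonicity conditions (i) and (ii) give $[\underline{a}^s,\overline{a}^s]\subseteq[\underline{a}^r,\overline{a}^r]$. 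Once the level-set identity is in hand, the remaining properties follow directly: each $[\underline{a}^r,\overline{a}^r]$ is a bounded closed interval (so $\tilde a$ is fuzzy-convex and has compact support), condition (iii) guarantees $[\underline{a}^1,\overline{a}^1]\ne\emptyset$ so $\tilde a$ is normal, upper semicontinuity follows from closedness of every strict level set (using (iv) again to handle supremum attainment), and condition (v) together with the characterization $[\tilde a]^0=\overline{\mathrm{supp}\,\tilde a}$ identifies the $0$-level set with $[\underline{a}^0,\overline{a}^0]$.

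For the converse, I would start with a fuzzy number $\tilde a$ and let $[\tilde a]^r=[\underline{a}^r,\overline{a}^r]$. Nestedness of level sets ($r_1\le r_2 \Rightarrow [\tilde a]^{r_2}\subseteq[\tilde a]^{r_1}$) yields the monotonicity properties (i) and (ii); normality yields (iii). The key identity $[\tilde a]^k=\bigcap_{r<k}[\tilde a]^r$ for $k\in(0,1]$, which is a standard consequence of the definition of $r$-level sets via $\ge r$, implies that $\underline{a}^k=\sup_{r<k}\underline{a}^r$ and $\overline{a}^k=\inf_{r<k}\overline{a}^r$, giving (iv). Condition (v) is the defining property $[\tilde a]^0=\overline{\{t:\tilde a(t)>0\}}$, which gives $\underline{a}^0=\lim_{r\to 0^+}\underline{a}^r$ and $\overline{a}^0=\lim_{r\to 0^+}\overline{a}^r$. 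Finally, the representation $\tilde a(t)=\sup\{r:\underline{a}^r\le t\le\overline{a}^r\}$ reduces to $\sup\{r:t\in[\tilde a]^r\}$, which equals $\tilde a(t)$ by definition of $r$-level set together with upper semicontinuity to secure attainment.

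The main obstacle, as in the original Goetschel--Voxman argument, is the nontrivial inclusion $[\tilde a]^r\subseteq[\underline{a}^r,\overline{a}^r]$ in the forward direction. It is precisely here that one cannot use only the pointwise monotonicity (i)--(iii): without the one-sided continuity (iv), one could not guarantee that the supremum $s=\tilde a(t)$ is actually achieved in the sense $\underline{a}^s\le t\le \overline{a}^s$. This is also what forces the slightly asymmetric treatment of $r=0$ via condition (v) corresponding to the closure in the definition of $[\tilde a]^0$. The rest of the verification is essentially bookkeeping about intervals.
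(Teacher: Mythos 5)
The paper does not actually prove this lemma --- it is quoted as a known result from Theorem~1.1 of Goetschel--Voxman and Lemma~2.1 of Xu--Liao--Nieto --- so there is no in-paper argument to compare against, and your sketch is in effect a reconstruction of the standard Goetschel--Voxman proof. It is correct and hits the one genuinely delicate point: using the left-continuity condition (iv) to show that the supremum defining $\tilde{a}(t)$ is attained, which is what gives the nontrivial inclusion $[\tilde{a}]^r\subseteq[\underline{a}^r,\overline{a}^r]$, with the remaining properties (normality via (iii), upper semicontinuity via closedness of the level sets, and the $r=0$ case via (v) and the closure-of-support definition of $[\tilde{a}]^0$) handled as routine bookkeeping, exactly as in the cited source.
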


For $\tilde{u}, \tilde{v}\in\mathbb{R}_{f}$ and $\lambda \in\mathbb{R}$,
the sum $\tilde{u}+\tilde{v}$ and the product $\lambda \cdot \tilde{u}$
are defined by $[\tilde{u}+\tilde{v}]^{r}=[\tilde{u}]^{r}+[\tilde{v}]^{r}$
and $[\lambda \cdotp \tilde{u}]^r=\lambda[\tilde{u}]^r$ for all $r\in[0,1]$,
where $[\tilde{u}]^{r}+[\tilde{v}]^{r}$ means the usual addition of two intervals
(subsets) of $\mathbb{R}$ and $\lambda[\tilde{u}]^r$ means the usual product
between a scalar and a subset of $\mathbb{R}$. The product $\tilde{u} \odot \tilde{v}$
of fuzzy numbers $\tilde{u}$ and $\tilde{v}$, is defined by
$[\tilde{u}\odot\tilde{v}]^r=[\min\{ \underline{u}^r \underline{v}^r,
\underline{u}^r \overline{v}^r, \overline{u}^r \underline{v}^r,
\overline{u}^r \overline{v}^r \},
\max \{\underline{u}^r \underline{v}^r, \underline{u}^r \overline{v}^r,
\overline{u}^r \underline{v}^r, \overline{u}^r \overline{v}^r \}]$.
The metric structure is given by the Hausdorff distance
$D:\mathbb{R}_{f} \times \mathbb{R}_{f} \rightarrow \mathbb{R}_{+}\cup\{0\}$,
$D(\tilde{u},\tilde{v})=\sup_{r\in[0,1]}
\max\{|\underline{u}^r-\underline{v}^r|,|\overline{u}^r-\overline{v}^r|\}$.

We say that the fuzzy number $\tilde{u}$ is triangular if
$\underline{u}^1=\overline{u}^1$, $\underline{u}^r
=\underline{u}^1-(1-r)(\underline{u}^1-\underline{u}^0)$
and $\overline{u}^r=\underline{u}^1-(1-r)(\overline{u}^0-\underline{u}^1)$.
The triangular fuzzy number $u$ is generally denoted by
$\tilde{u}=<\underline{u}^0,\underline{u}^1,\overline{u}^0>$.
We define the fuzzy zero $\tilde{0}_{x}$ as
$$
\tilde{0}_{x}=
\begin{cases}
1 & \text{ if } x=0,\\
0 & \text{ if } x\neq 0.
\end{cases}
$$
\begin{definition}[See \cite{Farhadinia}]
We say that $\tilde{f}:[a,b]\rightarrow \mathbb{R}_{f}$ is continuous
at $x\in[a,b]$, if both $\underline{f}^r(x)$ and $\overline{f}^r(x)$
are continuous functions of $x\in[a,b]$ for all $r \in [0,1]$.
\end{definition}

\begin{definition}[See \cite{Bede5}]
The generalized Hukuhara difference of two fuzzy numbers
$\tilde{x},\tilde{y}\in\mathbb{R}_{f}$ ($gH$-difference for short) is defined as follows:
\begin{equation*}
\tilde{x}\ominus_{gH}\tilde{y}=\tilde{z}
\Leftrightarrow
\tilde{x}=\tilde{y}+\tilde{z}
\text{ or }
\tilde{y}=\tilde{x}+(-1)\tilde{z}.
\end{equation*}
\end{definition}

If $\tilde{z}= \tilde{x}\ominus_{gH}\tilde{y}$ exists as a fuzzy number, then
its level cuts $[\underline{z}^r, \overline{z}^r]$ are obtained by
$\underline{z}^r=\min\{ \underline{x}^r - \underline{y}^r,
\overline{x}^r - \overline{y}^r\}$ and $\overline{z}^r
=\max\{ \underline{x}^r - \underline{y}^r,
\overline{x}^r - \overline{y}^r\}$ for all $r\in[0,1]$.

If the fuzzy function $\tilde{f}(x)$ is continuous in the metric $D$,
then its definite integral exists. Furthermore,
\begin{equation*}
\left( \underline{\int_{a}^{b} \tilde{f}(x)dx} \right)^r=\int_{a}^{b} \underline{f}^r (x)dx,
\quad \left( \overline{\int_{a}^{b} \tilde{f}(x)dx} \right)^r=\int_{a}^{b} \overline{f}^r (x)dx.
\end{equation*}

\begin{definition}[See \cite{Farhadinia}]
\label{def1}
Let $\tilde{a},\tilde{b}\in\mathbb{R}_{f}$. We write $\tilde{a}\preceq\tilde{b}$,
if $\underline{a}^r \leq \underline{b}^r $ and $\overline{a}^r \leq \overline{b}^r$
for all $r \in [0,1]$. We also write $\tilde{a}\prec\tilde{b}$,
if $\tilde{a}\preceq\tilde{b}$ and there exists an $r'\in [0,1]$
so that $\underline{a}^{r'}  < \underline{b}^{r'}$ and
$\overline{a}^{r'}  < \overline{b}^{r'}$.  Moreover,
$\tilde{a}\approx\tilde{b}$ if $\tilde{a}\preceq\tilde{b}$
and $\tilde{a}\succeq\tilde{b}$, that is,
$[\tilde{a}]^r=[\tilde{b}]^r$ for all $r \in[0,1]$.
\end{definition}

We say that $\tilde{a},\tilde{b}\in\mathbb{R}_{f}$
are comparable if either $\tilde{a}\preceq\tilde{b}$
or $\tilde{a}\succeq\tilde{b}$; and noncomparable otherwise.

\begin{definition}
Let $T$ is an open subset of $\mathbb{R}$. A point $x_{0}\in T$ is a locally minimum (resp. maximum) of $\tilde{f}(x)$ if there exists some $\epsilon > 0$ such that $\tilde{f}(x_{0})\preceq\tilde{f}(x)$  (resp. $\tilde{f}(x_{0})\succeq \tilde{f}(x)$) when $x\in N_{\epsilon}(x_{0})$.
\end{definition}

\begin{theorem} [See \cite{brunt}] \label{the3}
Let $f$ be a real--valued function differentiable on the open
interval $I$. If $f$ has a local extremum at $x\in I$, then  $\frac{df}{dx}(x)=0$.
\end{theorem}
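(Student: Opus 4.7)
The plan is to prove this classical Fermat-type result by analyzing one-sided difference quotients on either side of the extremum. Without loss of generality I would assume that $x$ is a local maximum of $f$; the local minimum case is then handled symmetrically (or, equivalently, by applying the argument to $-f$, which is differentiable with $(-f)'(x) = -f'(x)$).

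First I would fix the $\epsilon > 0$ supplied by the local maximum condition, so that $f(y) \leq f(x)$ for every $y \in I$ with $|y - x| < \epsilon$. Then I would split the analysis of the derivative
\[
f'(x) = \lim_{h\to 0} \frac{f(x+h) - f(x)}{h}
\]
into two one-sided limits, which must both exist and coincide with $f'(x)$ because $f$ is assumed differentiable at $x$ (here I need $x$ to be interior to $I$, which is given since $I$ is open, so that $x + h \in I$ for $|h|$ small enough). For $0 < h < \epsilon$ the numerator $f(x+h) - f(x)$ is nonpositive and the denominator is positive, so each quotient is $\leq 0$; passing to the limit from the right yields $f'(x) \leq 0$. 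For $-\epsilon < h < 0$ the numerator is again nonpositive but the denominator is now negative, so each quotient is $\geq 0$; passing to the limit from the left yields $f'(x) \geq 0$.

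Combining these two inequalities gives $f'(x) = 0$, as required. The only conceptual point that has to be invoked carefully is the equality of the two one-sided limits with the two-sided derivative, which is immediate from the definition of differentiability; beyond that, the argument is a direct sign comparison, so there is no genuine obstacle to overcome. The openness of $I$ is essential and should be flagged explicitly, since the conclusion is false at endpoints of a closed interval.
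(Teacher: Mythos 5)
Your proof is correct and is the standard one-sided difference quotient argument (Fermat's interior extremum theorem); the paper itself gives no proof, citing van Brunt \cite{brunt}, where essentially this same sign-comparison argument appears. Nothing further is needed.
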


\begin{lemma}\label{pro01}
If $x^*$ is a locally  minimum of $\tilde{f}(x)$, then
 $x^*$ is also
a locally  minimum of the real--valued functions $\underline{f}^r(x)$ and  $\overline{f}^r(x)$ for all $r \in [0,1].$ So we have  $\frac{d \underline{f}^r}{d x}(x^*)=\frac{d \overline{f}^r}{d x}(x^*)=0$.
\end{lemma}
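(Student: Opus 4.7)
The plan is to unfold the definition of local minimum for $\tilde{f}$ and then read off the conclusion for the level functions $\underline{f}^r$ and $\overline{f}^r$ directly from the partial order introduced in Definition~\ref{def1}. First, I would fix the witness $\epsilon>0$ coming from the hypothesis, so that $\tilde{f}(x^{*})\preceq\tilde{f}(x)$ for every $x\in N_{\epsilon}(x^{*})$.

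Next, I would invoke Definition~\ref{def1}: the relation $\tilde{f}(x^{*})\preceq\tilde{f}(x)$ is by definition equivalent to the pair of scalar inequalities
\[
\underline{f}^{r}(x^{*})\leq \underline{f}^{r}(x),\qquad \overline{f}^{r}(x^{*})\leq \overline{f}^{r}(x),
\]
holding for \emph{every} $r\in[0,1]$. Since the same $\epsilon$ works simultaneously for all $r$, this exactly says that $x^{*}$ is a local minimum of each of the real--valued functions $\underline{f}^{r}$ and $\overline{f}^{r}$ on the open interval $N_{\epsilon}(x^{*})$.

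Finally, because $\underline{f}^{r}$ and $\overline{f}^{r}$ are (by assumption, implicit in the statement) differentiable real--valued functions possessing a local extremum at $x^{*}$, Theorem~\ref{the3} yields
\[
\frac{d\underline{f}^{r}}{dx}(x^{*})=0=\frac{d\overline{f}^{r}}{dx}(x^{*})
\]
for all $r\in[0,1]$, which is the desired conclusion. There is no real obstacle here: the only subtlety worth flagging is that one must use the \emph{same} neighbourhood $N_{\epsilon}(x^{*})$ for every level $r$, which is automatic from the fuzzy definition of local minimum but should be stated explicitly, and one must assume (or note as a standing hypothesis) the differentiability of the level functions so that the classical Fermat--type Theorem~\ref{the3} applies.
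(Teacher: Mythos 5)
Your proposal is correct and follows essentially the same route as the paper: unfold the fuzzy local-minimum hypothesis via Definition~\ref{def1} to obtain $\underline{f}^{r}(x^{*})\leq\underline{f}^{r}(x)$ and $\overline{f}^{r}(x^{*})\leq\overline{f}^{r}(x)$ on $N_{\epsilon}(x^{*})$ for all $r$, then apply Theorem~\ref{the3}. Your remarks about using the same neighbourhood for every $r$ and the implicit differentiability of the level functions are reasonable points of care that the paper leaves tacit, but they do not change the argument.
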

\begin{proof}
In the  neighborhood $N_{\epsilon}(x^{*})$ we have $\tilde{f}(x^{*})\preceq\tilde{f}(x)$. Using Definition \ref{def1} we get
\begin{equation*}
\underline{f}^r(x^*) \leq \underline{f}^r(x) ,~
\overline{f}^r(x^*) \leq \overline{f}^r(x)
\end{equation*}
for all $r\in[0,1].$ So $ x^*$
is also a local minimum of the real-valued functions  $\underline{f}^r(x)$  and  $\overline{f}^r(x)$  for all $r\in[0,1]$ and by Theorem \ref{the3} we arrive at
$\frac{d \underline{f}^r}{d x}(x^*)=\frac{d \overline{f}^r}{dx}(x^*)=0.$
\end{proof}

\begin{definition}[See \cite{Farhadinia}]
 A fuzzy  neighborhood $\tilde{N}_{\epsilon }(\tilde{\hat{f}})$ is the set of all curves $\tilde{f} $ satisfying for all~$x\in  [x_{0},x_{f}]$
\begin{equation*}
D(\tilde{\hat{f}}(x),\tilde{f}(x))=\sup_{r \in [0,1]}\max \{|  \hat{\underline{f}}^r (x)- \underline{f}^r (x)|\ , | \overline{\hat{f}}^r(x)-\overline f^r(x)|\}\leq\epsilon
\end{equation*}
where~$\epsilon$ is a small real number.
\end{definition}

\begin{definition}
Let $\tilde{f}$ be a fuzzy function defined on the $\mathbb{R}_f$. The function $\tilde{f}$ is said to have a local minimum at $\tilde{\hat{x}}$ if there exists a number $\epsilon>0$ such that for any $\tilde{x}\in\tilde{N}_{\epsilon }(\tilde{\hat{x}}),\tilde{f}(\tilde{\hat{x}})\preceq \tilde{f}(\tilde{x}).$
\end{definition}

\begin{definition}
 We say that the fuzzy curve $\tilde{f}(x)$ intersects the fuzzy curve $\tilde{g}(x)$ at $x_{0}$ if $\underline{f}^1(x_0)=\underline{g}^1(x_0)$ and $\overline{f}^1(x_0)=\overline{g}^1(x_0)$.
\end{definition}



\section{Optimality for Fuzzy Fractional Variational Problems}\label{3}
Following \cite{Allahviranloo1},  we denote the space of all continuous fuzzy valued functions on $[a,b]\in \mathbb{R}$ by $C^F[a,b];$ the class of fuzzy functions with continuous first derivatives on $[a,b]\in \mathbb{R}$ by $C^{F1}[a,b];$   and the space of all
Lebesgue integrable fuzzy valued functions on the bounded interval $[a,b]$ is indicated by $L^F[a,b].$

Let us consider the following problem:
\begin{equation}\label{e1}
\begin{gathered}
 \tilde{J}(\tilde{y})=\int_{a}^{b} \tilde{L}\Big(x,\tilde{y}(x),   ^{gH-C}\hspace{-0.10cm}_a\mathcal{D}{_{ix}^{\alpha}}\tilde{y}(x),^{gH-C}\hspace{-0.10cm}_x\mathcal{D}{_{ib}^{\beta}}\tilde{y}(x),\\
\tilde{y}(a),\tilde{y}(b)\Big) dx  \longrightarrow \rm{extr},   \quad i = 1, 2,  \\
(\tilde{y}(a)=\tilde{y}_a),~~~(\tilde{y}(b)=\tilde{y}_b).
\end{gathered}
\end{equation}
Here, the Lagrange function $\tilde{L}$ is   assumed to be of class $C^{F1}$ on all its arguments.
To fix notation, we consider $\alpha,\beta\in(0,1)$. Here  $^{gH-C}\hspace{-0.10cm}_a\mathcal{D}{_{ix}^{\alpha}}\tilde{y}(x)$ and $~^{gH-C}\hspace{-0.10cm}_x\mathcal{D}{_{ib}^{\beta}}\tilde{y}(x)$ denote the left Liouville--Caputo Fuzzy Fractional Derivative (LCFFD) and the right Liouville--Caputo Fuzzy Fractional
Derivative (RCFFD), respectively:
\begin{equation*}
^{gH-C}\hspace{-0.10cm}_a\mathcal{D}{_{ix}^{\alpha}}\tilde{y}(x)=\frac{1}{\Gamma(1-\alpha)}\int_{a}^{x} (x-t)^{-\alpha} (\mathcal{D}{_{i,gH}}\tilde{y})(t)dt,
\end{equation*}
\begin{equation*}
^{gH-C}\hspace{-0.10cm}_x\mathcal{D}{_{ib}^{\beta}}\tilde{y}(x)=\frac{-1}{\Gamma(1-\beta)}\int_{x}^{b} (t-x)^{-\beta} (\mathcal{D}{_{i,gH}}\tilde{y})(t)dt
\end{equation*}
for $i=1,2$.
\begin{remark}
We use the notation $^{gH-C}_{\qquad a}\mathcal{D}{_{ix}^{\beta}} \tilde{f}$
when the fuzzy-valued function $\tilde{f}$ is $[(i)-gH]_{\alpha}^{C}$-differentiable
with respect to the independent variable $x$, $i \in \{1,2\}$.
\end{remark}

\begin{definition}[See \cite{Allahviranloo2}]
Let $\alpha\in[0,1]$ and $\tilde{f}:[a,b]\rightarrow \mathbb{R}_f$ be
$[gH]_{\alpha}^{C}$-differentiable at $x \in[a,b]$. We say that
$\tilde{f}$ is $[(1)-gH]_{\alpha}^{C}$-differentiable at $x \in[a,b]$ if
\begin{equation*}
[^{gH-C}_{\qquad a}\mathcal{D}_x^{\alpha}\tilde{f}(x)]^r
=\left[^{C}_aD_x^{\alpha}\underline{f}^r(x),
{^{C}_aD_x^{\alpha}}\overline{f}^r(x)\right],
\quad 0\leq r \leq 1,
\end{equation*}
and that $\tilde{f}$ is $[(2)-gH]_{\alpha}^{C}$-differentiable at $x $ if
\begin{equation*}
[^{gH-C}_{\qquad a}\mathcal{D}_x^{\alpha}\tilde{f}(x)]^r
=\left[^{C}_aD_x^{\alpha}\overline{f}^r(x),
{^{C}_aD_x^{\alpha}}\underline{f}^r(x)\right],
\quad 0\leq r \leq 1.
\end{equation*}
\end{definition}

\begin{remark}
We use the notation $^{gH-C}_{\qquad a}\mathcal{D}{_{ix}^{\beta}} \tilde{f}$
when the fuzzy-valued function $\tilde{f}$ is $[(i)-gH]_{\alpha}^{C}$-differentiable
with respect to the independent variable $x$, $i \in \{1,2\}$.
\end{remark}

 Along the work we denote by
$\partial_i L$ the partial derivative of function $L$ with respect to its
$i$th argument.
To develop the necessary conditions for the extremum for (\ref{e1}), assume that $\tilde{y}^*(x)$ is the
desired function, let $\epsilon \in \mathbb{R},$ and define a family of curves  $\tilde{y}(x)=\tilde{y}^*(x)+\epsilon \tilde{h}(x)$, where  $ \tilde{h}$ is an arbitrary admissible variation. We do not require $\tilde{h}(a)=\tilde{0}$ or $\tilde{h}(b)=\tilde{0}$ in the case when $\tilde{y}(a)$ or $\tilde{y}(b)$, respectively, is free (it is possible that both are free). Let
\begin{equation*}
\begin{gathered}
\tilde{J}(\epsilon)=\int_{a}^{b} \tilde{L}\left(x,\tilde{y}^*(x)+\epsilon \tilde{h}(x), ^{gH-C}\hspace{-0.10cm}_a\mathcal{D}{_{ix}^{\alpha}}\left(\tilde{y}^*+\epsilon\tilde{h}(x)\right),
^{gH-C}\hspace{-0.10cm}_x\mathcal{D}{_{ib}^{\beta}}\left(\tilde{y}^*+\epsilon\tilde{h}(x)\right)\right.,\\
\left.\tilde{y}^*(a)+\epsilon \tilde{h}(a),\tilde{y}^*(b)+ \epsilon \tilde{h}(b)\right)dx,
\end{gathered}
\end{equation*}
for $i=1,2$. The lower bound and upper bound of $\tilde{J}$ are
\begin{align*}
\underline{J}^r(\epsilon)=\int_{a}^{b}\left\{\underline{L}^r\left[x,\tilde{y}^{*}(x)+\epsilon \tilde{h}(x),\tilde{y}^*(a)+ \epsilon \tilde{h}(a),\tilde{y}^*(b)+ \epsilon \tilde{h}(b)\right]^r\right\} dx
\end{align*}
and
\begin{align*}
\overline{J}^r(\epsilon)=\int_{a}^{b} \left\{\overline{L}^r\left[x,\tilde{y}^{*}(x)+\epsilon \tilde{h}(x),\tilde{y}^*(a)+ \epsilon \tilde{h}(a),\tilde{y}^*(b)+ \epsilon \tilde{h}(b)\right]^r\right\}  dx.
\end{align*}
respectively, where
\begin{align*}
\left[x,\tilde{y}^{*}(x)   + \epsilon \tilde{h}(x)\right.&\left.,\tilde{y}^*(a)+ \epsilon \tilde{h}(a),\tilde{y}^*(b)+ \epsilon \tilde{h}(b)\right]^r\\=\left(x,\right.&\underline{y}^{*r}(x)+\epsilon \underline{h}^r(x), \overline{y}^{*r}(x)+\epsilon \overline{h}^r(x),_a^{C}\hspace{-0.10cm}D{_{x}^{\alpha}}\left(\underline{y}^{*r}+ \epsilon\underline{h}^r(x)\right),\\
&_a^{C}\hspace{-0.10cm}D{_{x}^{\alpha}}\left(\overline{y}^{*r}+ \epsilon\overline{h}^r(x)\right),_x^{C}\hspace{-0.10cm}D{_{b}^{\beta}}\left(\underline{y}^{*r}+ \epsilon\underline{h}^r(x)\right), _x^{C}\hspace{-0.10cm}D{_{b}^{\beta}}\left(\overline{y}^{*r}+ \epsilon\overline{h}^r(x)\right),\\
&\left.\underline{y}^r(a)+ \epsilon \underline{h}^r(a), \overline{y}^{*r}(a)+ \epsilon \overline{h}^r(a), ~\underline{y}^{*r}(b)+ \epsilon \underline{h}^r(b),\overline{y}^{*r}(b)+ \epsilon \overline{h}^r(b)\right).
\end{align*}

By Lemma \ref{pro01}, $\tilde{J}(\epsilon)$ is extremum at $\epsilon=0,$   therefore   necessary conditions for $\tilde{y}$ to be an extremizer are given by
setting $\frac{d\underline{J}^r}{d\epsilon}=0,~~\frac{d\overline{J}^r}{d\epsilon}=0,$ at $\epsilon=0$,  for all admissible $\tilde{h}(x)$.
Differentiating $\underline{J}^r$, we obtain

$\frac{d\underline{J}^r}{d\epsilon}|_{\epsilon=0}=0 \longrightarrow$
\begin{align}
\int_{a}^{b}  & \left[\partial _2 \underline{L}^r(...) \underline{h}^r + \partial _3 \underline{L}^r(...) \overline{h}^r +  \partial _4 \underline{L}^r(...) _a^{C}\hspace{-0.10cm}D{_{x}^{\alpha}} \underline{h}^r(x) + \partial _5 \underline{L}^r(...) _a^{C}\hspace{-0.10cm}D{_{x}^{\alpha}}\overline{h}^r(x) \right. \nonumber\\ &+ \partial _6 \underline{L}(...) _x^{C}\hspace{-0.10cm}D{_{b}^{\beta}} \underline{h}^r(x) \nonumber
+ \partial _7 \underline{L}^r(...) _x^{C}\hspace{-0.10cm}D{_{b}^{\beta}}\overline{h}^r(x)+\partial _8 \underline{L}^r(...)\underline{h}^r(a) \\&+\left.\partial _9 \underline{L}^r(...)   \overline{h}^r(a)+ \partial _{10} \underline{L}(...)  \underline{h}^r(b)+ \partial _{11}\underline{L}^r(...)  \overline{h}^r(b)\right]dx=0  \label{e2}
\end{align}
where
\begin{align*}
(...)=(x,\underline{y}^{*r}(x),\overline{y}^{*r}(x),& _a^{C}\hspace{-0.10cm}D{_{x}^{\alpha}}\underline{y}^{*r},_a^{C}\hspace{-0.10cm}D{_{x}^{\alpha}}\overline{y}^{*r},
_x^{C}\hspace{-0.10cm}D{_{b}^{\beta}}\underline{y}^{*r},\\
&_x^{C}\hspace{-0.10cm}D{_{b}^{\beta}}\overline{y}^{*r},\underline{y}^{*r}(a),\overline{y}^{*r}(a), ~\underline{y}^{*r}(b), \overline{y}^{*r}(b)).
\end{align*}
We consider   Eq. (\ref{e2}). Using  integration by parts,
\begin{equation}\label{e4}
\begin{gathered}
\int_{a}^{b} \partial _4 \underline{L}^r(...) _a^{C}\hspace{-0.10cm}D{_{x}^{\alpha}} \underline{h}^r(x)dx= \int_{a}^{b} \underline{h}^r(x) _x~\hspace{-0.10cm}D{_{b}^{\alpha}} \partial _4 \underline{L}^r(...)  dx +   _x\hspace{-0.10cm}I{_{b}^{1-\alpha}}  \partial _4 \underline{L}^r(...) \underline{h}^r(x)|_{x=a}^{x=b},
\\
\int_{a}^{b} \partial _5 \underline{L}^r(...) _a^{C}\hspace{-0.10cm}D{_{x}^{\alpha}} \overline{h}^r(x)dx= \int_{a}^{b} \overline{h}^r(x) _x~\hspace{-0.10cm}D{_{b}^{\alpha}} \partial _5 \underline{L}^r(...)  dx +   _x\hspace{-0.10cm}I{_{b}^{1-\alpha}}  \partial _5 \underline{L}^r(...) \overline{h}^r(x)|_{x=a}^{x=b},
\\
\int_{a}^{b} \partial _6 \underline{L}^r(...) _x^{C}\hspace{-0.10cm}D{_{b}^{\beta}} \underline{h}^r(x)dx= \int_{a}^{b} \underline{h}^r(x) _a~\hspace{-0.10cm}D{_{x}^{\beta}} \partial _6 \underline{L}^r(...)  dx -  _a\hspace{-0.10cm}I{_{x}^{1-\beta}}  \partial _6\underline{L}^r(...) \underline{h}^r(x)|_{x=a}^{x=b},
\\
\int_{a}^{b} \partial _7 \underline{L}^r(...) _x^{C}\hspace{-0.10cm}D{_{b}^{\beta}} \overline{h}^r(x)dx= \int_{a}^{b} \overline{h}^r(x) _a~\hspace{-0.10cm}D{_{x}^{\beta}} \partial _7 \underline{L}^r(...)  dx -  _a\hspace{-0.10cm}I{_{x}^{1-\beta}}  \partial _7\underline{L}^r(...) \overline{h}^r(x)|_{x=a}^{x=b}.
\end{gathered}
\end{equation}
Substituting Eqs. (\ref{e4}) into Eq. (\ref{e2}), we get
\begin{align}\label{e8}
&\int_{a}^{b}  \left\{ \left[\partial_2   \underline{L}^r(...)+  _x~\hspace{-0.10cm}D{_{b}^{\alpha}} \partial_4 \underline{L}^r(...) + _a ~\hspace{-0.10cm}D{_{x}^{\beta}} \partial_6 \underline{L}^r(...)\right]\underline{h}^r(x)+[\partial_3 \underline{L}^r(...)+ _x\hspace{-0.10cm}D{_{b}^{\alpha}} \partial_5 \underline{L}^r(...)\nonumber \right.\\
&\quad\left. +_a~\hspace{-0.10cm}D{_{x}^{\beta}} \partial_7 \underline{L}^r(...)]\overline{h}^r(x)\right\}dx
+ \left( _x ~\hspace{-0.10cm}I{_{b}^{1-\alpha}}  \partial _4 \underline{L}^r(...) -  _a\hspace{-0.10cm}I{_{x}^{1-\beta}}  \partial _6\underline{L}^r(...)\right)  \underline{h}^r(x)|_{x=a}^{x=b}\nonumber \\
&\quad+\left(_x~\hspace{-0.10cm}I{_{b}^{1-\alpha}}  \partial _5 \underline{L}^r(...)- _a\hspace{-0.10cm}I{_{x}^{1-\beta}}  \partial _7\underline{L}^r(...)\right) \overline{h}^r(x)|_{x=a}^{x=b} +\int_{a}^{b} \left\{\partial _8 \underline{L}^r(...)\underline{h}^r(a)\right. \nonumber \\
&\quad+\left.\partial _9 \underline{L}^r(...)   \overline{h}^r(a)+\partial _{10} \underline{L}^r(...)  \underline{h}^r(b)+\partial _{11}\underline{L}^r(...)  \overline{h}^r(b)\right\}dx=0.
\end{align}
We first consider functions $\underline{h}^r$ and $\overline{h}^r$ such that   $\underline{h}^r(a)=\overline{h}^r(a)=\underline{h}^r(b)=\overline{h}^r(b)=0$. Then, by the fundamental lemma of the calculus of variations,
we deduce that
\begin{equation}\label{e9}
\partial_2 \underline{L}^r(...)+  _x\hspace{-0.10cm}D{_{b}^{\alpha}} \partial_4 \underline{L}^r(...) + _a\hspace{-0.10cm}D{_{x}^{\beta}} \partial_6 \underline{L}^r(...)=0,
\end{equation}
\begin{equation}\label{e10}
\partial_3 \underline{L}^r(...)+  _x\hspace{-0.10cm}D{_{b}^{\alpha}} \partial_5 \underline{L}^r(...)+
 _a\hspace{-0.10cm}D{_{x}^{\beta}} \partial_7 \underline{L}^r(...)=0.
\end{equation}
for all $x\in[a,b].$
Therefore, in order for $\tilde{y}^*$ to be an extremizer to the problem (\ref{e1}), $\tilde{y}^*$ must be a solution of the fuzzy fractional
Euler--Lagrange equations (\ref{e9}) and (\ref{e10}). But if $\tilde{y}^*$ is a solution of (\ref{e9}) and (\ref{e10}), the first integral in expression (\ref{e8}) vanishes, then the condition
(\ref{e2}) takes the form
\begin{align*}
&\underline{h}^r(a)\left[\int_{a}^{b} \partial _8 \underline{L}^r(...)dx-( _x ~\hspace{-0.10cm}I{_{b}^{1-\alpha}}  \partial _4 \underline{L}^r(...) -  _a\hspace{-0.10cm}I{_{x}^{1-\beta}}  \partial _6\underline{L}^r(...))|_{x=a}\right]
\\ & + \overline{h}^r(a)\left[\int_{a}^{b} \partial _9 \underline{L}^r(...)dx-( _x ~\hspace{-0.10cm}I{_{b}^{1-\alpha}}  \partial _5 \underline{L}^r(...) -  _a\hspace{-0.10cm}I{_{x}^{1-\beta}}  \partial _7\underline{L}^r(...))|_{x=a}\right]
\\ & +\underline{h}^r(b)\left[\int_{a}^{b} \partial _{10}\underline{L}^r(...)dx+( _x ~\hspace{-0.10cm}I{_{b}^{1-\alpha}}  \partial _4 \underline{L}^r(...) -  _a\hspace{-0.10cm}I{_{x}^{1-\beta}}  \partial _6\underline{L}^r(...))|_{x=b}\right]
\\ & +\overline{h}^r(b)\left[\int_{a}^{b} \partial _{11} \underline{L}^r(...)dx+( _x ~\hspace{-0.10cm}I{_{b}^{1-\alpha}}  \partial _5 \underline{L}^r(...) -  _a\hspace{-0.10cm}I{_{x}^{1-\beta}}  \partial _7\underline{L}^r(...))|_{x=b}\right]=0.
\end{align*}
If $\tilde{y}(a)=\tilde{y}_a$ and $\tilde{y}(b)=\tilde{y}_b$ are given in the formulation of problem (\ref{e1}), then the latter equation is trivially satisfied since $\tilde{h}(a)=\tilde{h}(b)=\tilde{0}$.
Because $\underline{h}^r(a),~\overline{h}^r(a)$ are  arbitrary, when $\tilde{y}(a)$ is free, we have
\begin{equation}\label{e12}
\int_{a}^{b} \partial _8 \underline{L}^r(...)dx-\left( _x ~\hspace{-0.10cm}I{_{b}^{1-\alpha}}  \partial _4 \underline{L}^r(...) -  _a\hspace{-0.10cm}I{_{x}^{1-\beta}}  \partial _6\underline{L}^r(...)\right)|_{x=a}=0,
\end{equation}
\begin{equation}\label{e13}
\int_{a}^{b} \partial _9 \underline{L}^r(...)dx-\left( _x ~\hspace{-0.10cm}I{_{b}^{1-\alpha}}  \partial _5 \underline{L}^r(...) -  _a\hspace{-0.10cm}I{_{x}^{1-\beta}}  \partial _7\underline{L}^r(...)\right)|_{x=a}=0.
\end{equation}
When $\tilde{y}(b)$ is free, we get
\begin{equation}\label{e14}
\int_{a}^{b} \partial _{10}\underline{L}^r(...)dx+\left( _x ~\hspace{-0.10cm}I{_{b}^{1-\alpha}}  \partial _4 \underline{L}^r(...) -  _a\hspace{-0.10cm}I{_{x}^{1-\beta}}  \partial _6\underline{L}^r(...)\right)|_{x=b}=0,
\end{equation}
\begin{equation}\label{e15}
\int_{a}^{b} \partial _{11} \underline{L}^r(...)dx+\left( _x ~\hspace{-0.10cm}I{_{b}^{1-\alpha}}  \partial _5 \underline{L}^r(...) -  _a\hspace{-0.10cm}I{_{x}^{1-\beta}}  \partial _7\underline{L}^r(...)\right)|_{x=b}=0,
\end{equation}
because $\underline{h}^r(b)$ and $\overline{h}^r(b)$ are arbitrary. Following the scheme of obtaining (\ref{e9})-(\ref{e15}) and adapting it to the case under consideration $\frac{d \overline{J}^r}{d \epsilon}=0$, one can show that
\begin{equation*}
\partial_2 \overline{L}^r(...)+  _x\hspace{-0.10cm}D{_{b}^{\alpha}} \partial_4 \overline{L}^r(...) + _a\hspace{-0.10cm}D{_{x}^{\beta}} \partial_6 \overline{L}^r(...)=0,
\end{equation*}
\begin{equation*}
\partial_3 \overline{L}^r(...)+  _x\hspace{-0.10cm}D{_{b}^{\alpha}} \partial_5 \overline{L}^r(...)+
 _a\hspace{-0.10cm}D{_{x}^{\beta}} \partial_7 \overline{L}^r(...)=0,
\end{equation*}
for all $x\in[a,b]$. Moreover, if $\tilde{y}(a)$ is not specified, then
\begin{equation*}
\int_{a}^{b} \partial _8 \overline{L}^r(...)dx-\left( _x ~\hspace{-0.10cm}I{_{b}^{1-\alpha}}  \partial _4 \overline{L}^r(...) -  _a\hspace{-0.10cm}I{_{x}^{1-\beta}}  \partial _6\overline{L}^r(...)\right)|_{x=a}=0,
\end{equation*}
\begin{equation*}
\int_{a}^{b} \partial _9 \overline{L}^r(...)dx-\left( _x ~\hspace{-0.10cm}I{_{b}^{1-\alpha}}  \partial _5 \overline{L}^r(...) -  _a\hspace{-0.10cm}I{_{x}^{1-\beta}}  \partial _7\overline{L}^r(...)\right)|_{x=a}=0,
\end{equation*}
and when $\tilde{y}(b)$ is free, then
\begin{equation*}
\int_{a}^{b} \partial _{10}\overline{L}^r(...)dx+\left( _x ~\hspace{-0.10cm}I{_{b}^{1-\alpha}}  \partial _4 \overline{L}^r(...) -  _a\hspace{-0.10cm}I{_{x}^{1-\beta}}  \partial _6\overline{L}^r(...)\right)|_{x=b}=0,
\end{equation*}
\begin{equation*}
\int_{a}^{b} \partial _{11}\overline{L}^r(...)dx+\left( _x ~\hspace{-0.10cm}I{_{b}^{1-\alpha}}  \partial _5 \overline{L}^r(...) -  _a\hspace{-0.10cm}I{_{x}^{1-\beta}}  \partial _7\overline{L}^r(...)\right)|_{x=b}=0.
\end{equation*}
Now we are in a position to state the necessary conditions for a relative (local) minimum (maximum) of problem (\ref{e1}), as follows:
\begin{theorem}\label{e11}
Let $\tilde{y}^*$ be a local extremizer to problem (\ref{e1}). Then, $\tilde{y}$ satisfies the fractional Euler--Lagrange equations
\[
\begin{cases}
\partial_2 \underline{L}^r(...)+  _x \hspace{-0.10cm}D{_{b}^{\alpha}} \partial_4 \underline{L}^r(...) + _a \hspace{-0.10cm}D{_{x}^{\beta}} \partial_6 \underline{L}^r(...)=0,\\
\partial_2 \overline{L}^r(...)+  _x \hspace{-0.10cm}D{_{b}^{\alpha}} \partial_4 \overline{L}^r(...) + _a \hspace{-0.10cm}D{_{x}^{\beta}} \partial_6 \overline{L}^r(...)=0,\\
\partial_3 \underline{L}^r(...)+  _x \hspace{-0.10cm}D{_{b}^{\alpha}} \partial_5 \underline{L}^r(...)+
 _a \hspace{-0.10cm}D{_{x}^{\beta}} \partial_7 \underline{L}^r(...)=0,
\\
\partial_3 \overline{L}^r(...)+  _x \hspace{-0.10cm}D{_{b}^{\alpha}} \partial_5 \overline{L}^r(...)+
 _a \hspace{-0.10cm}D{_{x}^{\beta}} \partial_7 \underline{L}^r(...)=0.
\end{cases}
\]
for all $x\in[a,b]$. Moreover, if $\tilde{y}(a)$ is free, then
\[
\begin{cases}
\int_{a}^{b} \partial _8 \underline{L}^r(...)dx-( _x ~\hspace{-0.10cm}I{_{b}^{1-\alpha}}  \partial _4 \underline{L}^r(...) -  _a\hspace{-0.10cm}I{_{x}^{1-\beta}}  \partial _6\underline{L}^r(...))|_{x=a}=0,
\\
\int_{a}^{b} \partial _8 \overline{L}^r(...)dx-( _x ~\hspace{-0.10cm}I{_{b}^{1-\alpha}}  \partial _4 \overline{L}^r(...) -  _a\hspace{-0.10cm}I{_{x}^{1-\beta}}  \partial _6\overline{L}^r(...))|_{x=a}=0,
\\
\int_{a}^{b} \partial _9 \underline{L}^r(...)dx-( _x ~\hspace{-0.10cm}I{_{b}^{1-\alpha}}  \partial _5 \underline{L}^r(...) -  _a\hspace{-0.10cm}I{_{x}^{1-\beta}}  \partial _7\underline{L}^r(...))|_{x=a}=0,
\\
\int_{a}^{b} \partial _9 \overline{L}^r(...)dx-( _x ~\hspace{-0.10cm}I{_{b}^{1-\alpha}}  \partial _5 \overline{L}^r(...) -  _a\hspace{-0.10cm}I{_{x}^{1-\beta}}  \partial _7\overline{L}^r(...))|_{x=a}=0,
\end{cases}
\]
and when $\tilde{y}(b)$ is free, then
\[
\begin{cases}
\int_{a}^{b} \partial _{10}\underline{L}^r(...)dx+( _x ~\hspace{-0.10cm}I{_{b}^{1-\alpha}}  \partial _4 \underline{L}^r(...) -  _a\hspace{-0.10cm}I{_{x}^{1-\beta}}  \partial _6\underline{L}^r(...))|_{x=b}=0,
\\
\int_{a}^{b} \partial _{10}\overline{L}^r(...)dx+( _x ~\hspace{-0.10cm}I{_{b}^{1-\alpha}}  \partial _4 \overline{L}^r(...) -  _a\hspace{-0.10cm}I{_{x}^{1-\beta}}  \partial _6 \overline{L}^r(...))|_{x=b}=0,
\\
\int_{a}^{b} \partial _{11}\underline{L}^r(...)dx+( _x ~\hspace{-0.10cm}I{_{b}^{1-\alpha}}  \partial _5 \underline{L}^r(...) -  _a\hspace{-0.10cm}I{_{x}^{1-\beta}}  \partial _7\underline{L}^r(...))|_{x=b}=0,
\\
\int_{a}^{b} \partial _{11}\overline{L}^r(...)dx+( _x ~\hspace{-0.10cm}I{_{b}^{1-\alpha}}  \partial _5 \overline{L}^r(...) -  _a\hspace{-0.10cm}I{_{x}^{1-\beta}}  \partial _7\overline{L}^r(...))|_{x=b}=0,
\end{cases}
\]
where
\begin{align*}
(...)=(x,\underline{y}^{*r}(x),\overline{y}^{*r}(x),&  _a^{C}\hspace{-0.10cm}D{_{x}^{\alpha}}\underline{y}^{*r},_a^{C}\hspace{-0.10cm}D{_{x}^{\alpha}}\overline{y}^{*r},
~_x^{C}\hspace{-0.10cm}D{_{b}^{\beta}}\underline{y}^{*r},\\
&_x^{C}\hspace{-0.10cm}D{_{b}^{\beta}}\overline{y}^{*r},\underline{y}^{*r}(a),\overline{y}^{*r}(a),\underline{y}^{*r}(b),\overline{y}^{*r}(b)).
\end{align*}

\end{theorem}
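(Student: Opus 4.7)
The plan is to execute the variational argument already sketched in the discussion preceding the statement, carefully separating the ``lower cut'' and ``upper cut'' pieces and handling the fractional operators via integration by parts. Given a local extremizer $\tilde{y}^*$, I would embed it in the one-parameter family $\tilde{y}=\tilde{y}^*+\epsilon\tilde{h}$ where the variation $\tilde{h}$ is free except where an endpoint is prescribed (in which case the corresponding $\tilde{h}(a)$ or $\tilde{h}(b)$ must vanish). Applying Lemma~\ref{pro01} to $\epsilon\mapsto\tilde{J}(\epsilon)$ reduces the problem to showing $\frac{d\underline{J}^r}{d\epsilon}(0)=0$ and $\frac{d\overline{J}^r}{d\epsilon}(0)=0$ for every $r\in[0,1]$; these are real-valued extremum conditions guaranteed by Theorem~\ref{the3}.

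Next I would differentiate $\underline{J}^r(\epsilon)$ under the integral by the chain rule applied to all eleven arguments of $\underline{L}^r$, arriving at equation~(\ref{e2}). The four terms in which the variations appear through $_a^{C}D_{x}^{\alpha}\underline{h}^r$, $_a^{C}D_{x}^{\alpha}\overline{h}^r$, $_x^{C}D_{b}^{\beta}\underline{h}^r$, and $_x^{C}D_{b}^{\beta}\overline{h}^r$ must be handled by the classical fractional integration-by-parts formulas, which transfer these Liouville--Caputo operators onto the partial derivatives $\partial_4\underline{L}^r,\ldots,\partial_7\underline{L}^r$ as the \emph{dual} Riemann--Liouville operators $_xD_b^{\alpha}$ and $_aD_x^{\beta}$, producing boundary contributions involving the fractional integrals $_xI_b^{1-\alpha}$ and $_aI_x^{1-\beta}$, exactly as displayed in (\ref{e4}). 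Substituting these into (\ref{e2}) yields (\ref{e8}).

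I would now split the argument into two stages. First, restrict to variations with $\underline{h}^r(a)=\overline{h}^r(a)=\underline{h}^r(b)=\overline{h}^r(b)=0$, so that all endpoint and boundary contributions drop out, leaving only the integral over $(a,b)$. Since $\underline{h}^r$ and $\overline{h}^r$ may be varied independently and arbitrarily on $(a,b)$, the fundamental lemma of the calculus of variations yields the two Euler--Lagrange equations for $\underline{L}^r$; repeating the entire calculation for $\overline{J}^r$ produces the other two. Second, with the four Euler--Lagrange equations in force, the integral term in (\ref{e8}) drops out, so (\ref{e8}) collapses to a linear combination of $\underline{h}^r(a)$, $\overline{h}^r(a)$, $\underline{h}^r(b)$, $\overline{h}^r(b)$. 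Whenever the corresponding endpoint is not prescribed, the associated variation is arbitrary, and its coefficient must vanish, which delivers the transversality conditions at $x=a$ and $x=b$ for $\underline{L}^r$, and then analogously for $\overline{L}^r$.

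The main difficulty I anticipate is bookkeeping rather than conceptual: with eleven arguments of $\underline{L}^r$ and $\overline{L}^r$, four fractional integration-by-parts substitutions per cut, and two independent families of variations $\underline{h}^r$ and $\overline{h}^r$ that must be separated when the fundamental lemma is invoked, there is ample room for sign and operator-direction slips (in particular, confusing the left Caputo operator $_a^{C}D_x^{\alpha}$ with the right Riemann--Liouville operator $_xD_b^{\alpha}$ obtained after transposition). A second, more subtle point is to justify that the lower and upper cuts admit genuinely independent admissible variations in a manner compatible with the monotonicity requirements of Lemma~\ref{lem1}; I would address this by choosing $\tilde{h}$ within the class of admissible fuzzy-valued variations so that, for $|\epsilon|$ sufficiently small, the perturbed curve remains in $\mathbb{R}_f$, and then arguing that the coefficients of $\underline{h}^r$ and $\overline{h}^r$ must vanish independently because each of the two parametrizing functions may be perturbed by an arbitrary compactly supported smooth bump on $(a,b)$.
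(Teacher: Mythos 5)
Your proposal is correct and follows essentially the same route as the paper: the paper's derivation preceding the theorem is exactly this variational argument—perturb $\tilde{y}^*$ by $\epsilon\tilde{h}$, reduce to $r$-cut conditions via Lemma \ref{pro01}, integrate by parts as in (\ref{e4}) to reach (\ref{e8}), first use interior variations with the fundamental lemma to get the Euler--Lagrange equations, then use arbitrary endpoint values of $\underline{h}^r,\overline{h}^r$ to obtain the natural boundary conditions, and repeat for $\overline{J}^r$. Your extra remark about justifying independent admissible variations of the lower and upper cuts is a reasonable refinement of a point the paper leaves implicit, but it does not change the method.
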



\begin{example}\label{exa2}
Let us consider the following problem:
\begin{equation}\label{e16}
\tilde{J}(\tilde{y}) = \frac{1}{2}\int_{0}^{1}  \left(^{gH-C}\hspace{-0.10cm}_{0}\mathcal{D}{_{x}^{\alpha}}\tilde{y}(x)\right)^{2} +\tilde{2}\odot\tilde{y}^2(0)+\tilde{3} \odot(\tilde{y}(1)- 1)^2 dx \longrightarrow \min,
\end{equation}
where $\tilde{2}= <1,2,3>,~\tilde{3}= <3,3,3>$.
\end{example}
$Solution$. We first derive the $r$-level set of $\tilde{J}$ for [(1)-gH]-differentiability of $\tilde{y}$
 as follows:
\begin{align*}
\left[\tilde{J}(\tilde{y})\right]^r= \left[ \frac{1}{2}\int_{0}^{1}  \right.&(_{0}^C~ \hspace{-0.10cm}D{_{x}^{\alpha}}\underline{y}^r(x))^{2} +(3-r) (\underline{y}^{r})^2(0)+3(\underline{y}^r(1)-1)^2 dx,\\
&\left. \frac{1}{2}\int_{0}^{1}  (_{0}^C~ \hspace{-0.10cm}D{_{x}^{\alpha}}\overline{y}^r(x))^{2} +(r+1)(\overline{y}^{r})^2(0)+3 (\overline{y}^r(1)-1)^2 dx \right].
\end{align*}
For this problem, the generalized Euler--Lagrange equations and the natural boundary conditions (see Theorem \ref{e11}) are given as
\begin{equation}\label{e17}
\begin{gathered}
_{x}~ \hspace{-0.10cm}D{_{1}^{\alpha}}(_{0}^C~ \hspace{-0.10cm}D{_{x}^{\alpha}}\underline{y}^r(x))=0,
\\
_{x}~ \hspace{-0.10cm}D{_{1}^{\alpha}}(_{0}^C~ \hspace{-0.10cm}D{_{x}^{\alpha}}\overline{y}^r(x))=0,\\
\int_{0}^{1} (3-r) \underline{y}^r(0) dx =_{x} \hspace{-0.10cm}I{_{1}^{1-\alpha}}(_{0}^C~ \hspace{-0.10cm}D{_{x}^{\alpha}}\underline{y}^r(x))|_{x=0},\\
\int_{0}^{1} 3(\underline{y}^r(1)-1) dx = -  _{x}~ \hspace{-0.10cm}I{_{1}^{1-\alpha}}(_{0}^C~ \hspace{-0.10cm}D{_{x}^{\alpha}}\underline{y}^r(x))|_{x=1},\\
\int_{0}^{1} (r+1) \overline{y}^r(0) dx = _{x} \hspace{-0.10cm}I{_{1}^{1-\alpha}}(_{0}^C~ \hspace{-0.10cm}D{_{x}^{\alpha}}\overline{y}^r(x))|_{x=0},\\
\int_{0}^{1} 3(\overline{y}^r(1)-1) dx = -  _{x}~ \hspace{-0.10cm}I{_{1}^{1-\alpha}}(_{0}^C~ \hspace{-0.10cm}D{_{x}^{\alpha}}\overline{y}^r(x))|_{x=1}.
\end{gathered}
\end{equation}
Note that it is difficult to solve the above fractional equations. For $0<\alpha<1$ , a numerical method should be used. When $\alpha$ goes to 1, problem (\ref{e16}) tends to
\begin{equation}\label{e23}
\tilde{J}(\tilde{y}) = \frac{1}{2} \int_{0}^{1}  (\tilde{y}^{'}(x))^{2} +\tilde{2}\odot \tilde{y}^2(0)+\tilde{3}\odot (\tilde{y}(1)- 1)^2 dx \longrightarrow \min,
\end{equation}
and Eqs. (\ref{e17}) could be replaced with
\begin{equation}\label{e24}
\begin{gathered}
(\underline{y}^{r})^{''}(x)=0,
\\
(\overline{y}^{r})^{''}(x)=0,
\\
(3-r)\underline{y}^r(0)=(\underline{y}^{r})^{'}(0),
\\
3(\underline{y}^r(1)-1)=-(\underline{y}^{r}){'}(1),
\\
(r+1)\overline{y}^r(0)=(\overline{y}^{r})^{'}(0),
\\
3(\overline{y}^r(1)-1)=-(\overline{y}^{r})^{'}(1).
\end{gathered}
\end{equation}
By solving Eqs. (\ref{e24}) we have $$\underline{y}^r(x)= \frac{-3r+9} {-4r+15} x +\frac{3} {-4r+15},~~\overline{y}^r(x)= \frac{3r+3} {4r+7} x +\frac{3} {4r+7}.$$  One can easily show that $\underline{y}^r(x)$ and $\overline{y}^r(x)$  satisfy Lemma \ref{lem1}. This solution is shown in Figure \ref{fig2}.

\begin{figure}[tbp]
\centering \includegraphics[scale=.50]{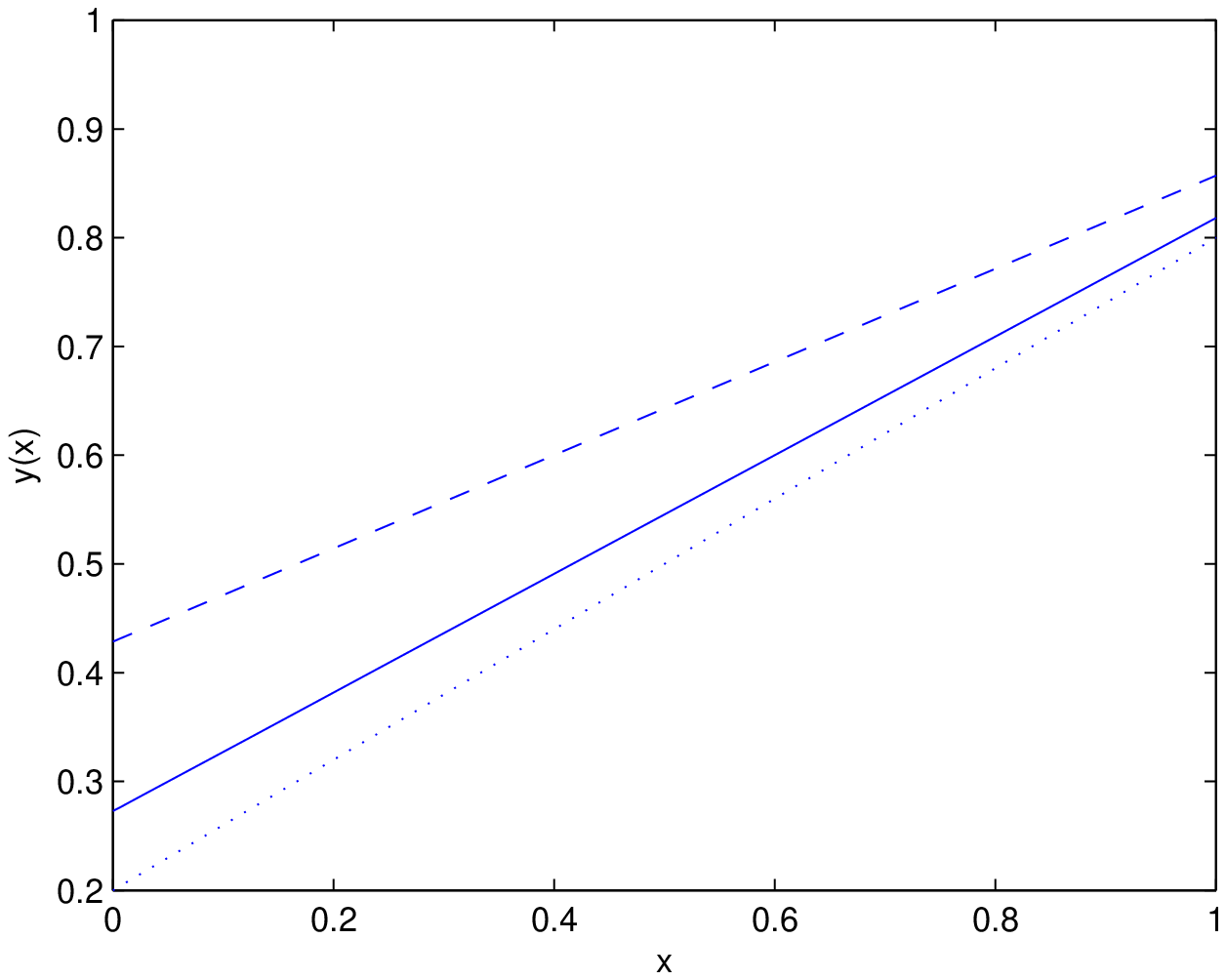}
\caption{Solution of Example \ref{exa2}.}
 $\underline{y}^0(x)$ (dotted line), $\overline{y}^0(x)$   (dashed line),~$\underline{y}^1(x)=\overline{y}^1(x)$ (solid line).\label{fig2}
\end{figure}


We now consider a more general case. We have assumed that the interval of integration of the functional \eqref{e1} is the same of the admissible functions. We generalize Theorem \ref{e11}, by considering a subinterval $[A,B]\subset [a,b]$ and the following optimization problem:
\begin{equation}\label{Ne1}
\begin{gathered}
 \tilde{J}(\tilde{y})=\int_{A}^{B} \tilde{L}\Big(x,\tilde{y}(x),   ^{gH-C}\hspace{-0.10cm}_a\mathcal{D}{_{ix}^{\alpha}}\tilde{y}(x),^{gH-C}\hspace{-0.10cm}_x\mathcal{D}{_{ib}^{\beta}}\tilde{y}(x),\\
\tilde{y}(a),\tilde{y}(A),\tilde{y}(B),\tilde{y}(b)\Big) dx  \longrightarrow \rm{extr},   \quad i = 1, 2,  \\
(\tilde{y}(a)=\tilde{y}_a),~~~(\tilde{y}(A)=\tilde{y}_A),~~~(\tilde{y}(B)=\tilde{y}_B),~~~(\tilde{y}(b)=\tilde{y}_b).
\end{gathered}
\end{equation}
Again, the function $\tilde{L}$ is   assumed to be of class $C^{F1}$ on all its arguments. In a similar way as done before, let $\tilde{y}^*(x)$ be an extremum for \eqref{Ne1}, $\epsilon \in \mathbb{R}$ a real, and consider the variation functions  $\tilde{y}(x)=\tilde{y}^*(x)+\epsilon \tilde{h}(x)$, where  $ \tilde{h}$ is an arbitrary admissible variation, which may satisfy or not the boundary conditions $\tilde{h}(a)=\tilde{0}$, $\tilde{h}(A)=\tilde{0}$, $\tilde{h}(B)=\tilde{0}$ or $\tilde{h}(b)=\tilde{0}$, depending if $\tilde{y}(a)$, $\tilde{y}(A)$, $\tilde{y}(B)$ or $\tilde{y}(b)$, is fixed or not. Define
\begin{equation*}
\begin{gathered}
\tilde{J}(\epsilon)=\int_{A}^{B} \tilde{L}\left(x,\tilde{y}^*(x)+\epsilon \tilde{h}(x), ^{gH-C}\hspace{-0.10cm}_a\mathcal{D}{_{ix}^{\alpha}}\left(\tilde{y}^*+\epsilon\tilde{h}(x)\right),
^{gH-C}\hspace{-0.10cm}_x\mathcal{D}{_{ib}^{\alpha}}\left(\tilde{y}^*+\epsilon\tilde{h}(x)\right)\right.,\\
\left.\tilde{y}^*(a)+\epsilon \tilde{h}(a),\tilde{y}^*(A)+\epsilon \tilde{h}(A),\tilde{y}^*(B)+\epsilon \tilde{h}(B),\tilde{y}^*(b)+ \epsilon \tilde{h}(b)\right)dx,
\end{gathered}
\end{equation*}
for $i=1,2$. The lower bound and upper bound of $\tilde{J}$ are
\begin{align*}
\underline{J}^r(\epsilon)=\int_{A}^{B}\left\{\underline{L}^r\left[x,\tilde{y}^{*}(x)+\epsilon \tilde{h}(x)\right]^r_{a,A,B,b}\right\} dx
\end{align*}
and
\begin{align*}
\overline{J}^r(\epsilon)=\int_{A}^{B} \left\{\overline{L}^r\left[x,\tilde{y}^{*}(x)+\epsilon \tilde{h}(x)\right]^r_{a,A,B,b}\right\}  dx,
\end{align*}
respectively, where
\begin{align*}
\left[x,\tilde{y}^{*}(x)   + \epsilon \tilde{h}(x)\right.&\left.\right]^r_{a,A,B,b}\\
=&\left(x,\right.\underline{y}^{*r}(x)+\epsilon \underline{h}^r(x), \overline{y}^{*r}(x)+\epsilon \overline{h}^r(x),_a^{C}\hspace{-0.10cm}D{_{x}^{\alpha}}\left(\underline{y}^{*r}+ \epsilon\underline{h}^r(x)\right),\\
&_a^{C}\hspace{-0.10cm}D{_{x}^{\alpha}}\left(\overline{y}^{*r}+ \epsilon\overline{h}^r(x)\right),_x^{C}\hspace{-0.10cm}D{_{b}^{\beta}}\left(\underline{y}^{*r}+ \epsilon\underline{h}^r(x)\right), _x^{C}\hspace{-0.10cm}D{_{b}^{\beta}}\left(\overline{y}^{*r}+ \epsilon\overline{h}^r(x)\right),\\
&\underline{y}^r(a)+ \epsilon \underline{h}^r(a), \overline{y}^{*r}(a)+ \epsilon \overline{h}^r(a), ~\underline{y}^{*r}(A)+ \epsilon \underline{h}^r(A),\overline{y}^{*r}(A)+ \epsilon \overline{h}^r(A),\\
&\left.\underline{y}^r(B)+ \epsilon \underline{h}^r(B), \overline{y}^{*r}(B)+ \epsilon \overline{h}^r(B), ~\underline{y}^{*r}(b)+ \epsilon \underline{h}^r(b),\overline{y}^{*r}(b)+ \epsilon \overline{h}^r(b)\right).
\end{align*}

Attending that $\tilde{J}(\epsilon)$ is extremum at $\epsilon=0,$  we have the necessary conditions $\frac{d\underline{J}^r}{d\epsilon}=0,~~\frac{d\overline{J}^r}{d\epsilon}=0,$ at $\epsilon=0$,  for all admissible function $\tilde{h}(x)$. Starting with equation
$$\frac{d\underline{J}^r}{d\epsilon}|_{\epsilon=0}=0,$$
we get
\begin{align}\label{N2}
\int_{A}^{B}  & \left[\partial _2 \underline{L}^r(...) \underline{h}^r + \partial _3 \underline{L}^r(...) \overline{h}^r +  \partial _4 \underline{L}^r(...) _a^{C}\hspace{-0.10cm}D{_{x}^{\alpha}} \underline{h}^r(x) + \partial _5 \underline{L}^r(...) _a^{C}\hspace{-0.10cm}D{_{x}^{\alpha}}\overline{h}^r(x) \right. \nonumber\\
&+ \partial _6 \underline{L}(...) _x^{C}\hspace{-0.10cm}D{_{b}^{\beta}} \underline{h}^r(x) \nonumber
+ \partial _7 \underline{L}^r(...) _x^{C}\hspace{-0.10cm}D{_{b}^{\beta}}\overline{h}^r(x)+\partial _8 \underline{L}^r(...)\underline{h}^r(a)\nonumber \\
&+\partial _9 \underline{L}^r(...)   \overline{h}^r(a)+ \partial _{10} \underline{L}(...)  \underline{h}^r(A)+ \partial _{11}\underline{L}^r(...)  \overline{h}^r(A)\nonumber\\
&+\left.\partial _{12} \underline{L}^r(...)   \underline{h}^r(B)+ \partial _{13} \underline{L}(...)  \overline{h}^r(B)+ \partial _{14}\underline{L}^r(...)  \underline{h}^r(b)+ \partial _{15}\underline{L}^r(...)  \overline{h}^r(b)\right]dx=0,
\end{align}
where $(...)=\left[x,\tilde{y}^{*}(x)\right]^r_{a,A,B,b}.$
Replacing the next four relations
\begin{align*}
\displaystyle \int_{A}^{B}\partial _4 \underline{L}^r(...) _a^{C}\hspace{-0.10cm}D{_{x}^{\alpha}} \underline{h}^r(x)dx
=&\displaystyle \int_{a}^{B}\partial _4 \underline{L}^r(...) _a^{C}\hspace{-0.10cm}D{_{x}^{\alpha}} \underline{h}^r(x)dx
-\int_{a}^{A}\partial _4 \underline{L}^r(...) _a^{C}\hspace{-0.10cm}D{_{x}^{\alpha}} \underline{h}^r(x)dx\\
\displaystyle=& \int_{a}^{B} \underline{h}^r(x) _x~\hspace{-0.10cm}D{_{B}^{\alpha}} \partial _4 \underline{L}^r(...)  dx
-\int_{a}^{A} \underline{h}^r(x) _x~\hspace{-0.10cm}D{_{A}^{\alpha}} \partial _4 \underline{L}^r(...)  dx\\
&\displaystyle+ _x\hspace{-0.10cm}I{_{B}^{1-\alpha}}  \partial _4 \underline{L}^r(...) \underline{h}^r(x)|_{x=a}^{x=B}
- _x\hspace{-0.10cm}I{_{A}^{1-\alpha}}  \partial _4 \underline{L}^r(...) \underline{h}^r(x)|_{x=a}^{x=A},\\
\displaystyle \int_{A}^{B}\partial _5 \underline{L}^r(...) _a^{C}\hspace{-0.10cm}D{_{x}^{\alpha}} \overline{h}^r(x)dx
\displaystyle=& \int_{a}^{B} \overline{h}^r(x) _x~\hspace{-0.10cm}D{_{B}^{\alpha}} \partial _5 \underline{L}^r(...)  dx
-\int_{a}^{A} \overline{h}^r(x) _x~\hspace{-0.10cm}D{_{A}^{\alpha}} \partial _5 \underline{L}^r(...)  dx\\
&\displaystyle+ _x\hspace{-0.10cm}I{_{B}^{1-\alpha}}  \partial _5 \underline{L}^r(...) \overline{h}^r(x)|_{x=a}^{x=B}
- _x\hspace{-0.10cm}I{_{A}^{1-\alpha}}  \partial _5 \underline{L}^r(...) \overline{h}^r(x)|_{x=a}^{x=A},\\
\displaystyle\int_{A}^{B} \partial _6 \underline{L}^r(...) _x^{C}\hspace{-0.10cm}D{_{b}^{\beta}} \underline{h}^r(x)dx
\displaystyle=&\int_{A}^{b} \partial _6 \underline{L}^r(...) _x^{C}\hspace{-0.10cm}D{_{b}^{\beta}} \underline{h}^r(x)dx
-\int_{B}^{b} \partial _6 \underline{L}^r(...) _x^{C}\hspace{-0.10cm}D{_{b}^{\beta}} \underline{h}^r(x)dx\\
\displaystyle = &\int_{A}^{b} \underline{h}^r(x) _A~\hspace{-0.10cm}D{_{x}^{\beta}} \partial _6 \underline{L}^r(...)  dx
\displaystyle -\int_{B}^{b} \underline{h}^r(x) _B~\hspace{-0.10cm}D{_{x}^{\beta}} \partial _6 \underline{L}^r(...)  dx\\
&\displaystyle-  _A\hspace{-0.10cm}I{_{x}^{1-\beta}}  \partial _6\underline{L}^r(...) \underline{h}^r(x)|_{x=A}^{x=b}
+_B\hspace{-0.10cm}I{_{x}^{1-\beta}}  \partial _6\underline{L}^r(...) \underline{h}^r(x)|_{x=B}^{x=b},\\
\displaystyle\int_{A}^{B} \partial _7 \underline{L}^r(...) _x^{C}\hspace{-0.10cm}D{_{b}^{\beta}} \overline{h}^r(x)dx
\displaystyle =& \int_{A}^{b} \overline{h}^r(x) _A~\hspace{-0.10cm}D{_{x}^{\beta}} \partial _7 \underline{L}^r(...)  dx
\displaystyle -\int_{B}^{b} \overline{h}^r(x) _B~\hspace{-0.10cm}D{_{x}^{\beta}} \partial _7 \underline{L}^r(...)  dx\\
&\displaystyle-  _A\hspace{-0.10cm}I{_{x}^{1-\beta}}  \partial _7\underline{L}^r(...) \overline{h}^r(x)|_{x=A}^{x=b}
+_B\hspace{-0.10cm}I{_{x}^{1-\beta}}  \partial _7\underline{L}^r(...) \overline{h}^r(x)|_{x=B}^{x=b},\\
\end{align*}
into Eq. \eqref{N2}, doing the same computations as presented before, and by the arbitrariness of $\tilde{h}$, we obtain the following result.

\begin{theorem}
Let $\tilde{y}^*$ be a local extremizer to problem (\ref{Ne1}). Then, $\tilde{y}$ satisfies the fractional Euler--Lagrange equations
\[
\begin{cases}
_x~\hspace{-0.10cm}D{_{B}^{\alpha}} \partial_4 \underline{L}^r(...)-_x~\hspace{-0.10cm}D{_{A}^{\alpha}} \partial_4 \underline{L}^r(...)=0,\\
_x~\hspace{-0.10cm}D{_{B}^{\alpha}} \partial_4 \overline{L}^r(...)-_x~\hspace{-0.10cm}D{_{A}^{\alpha}} \partial_4 \overline{L}^r(...)=0,\\
_x~\hspace{-0.10cm}D{_{B}^{\alpha}} \partial_5 \underline{L}^r(...)-_x~\hspace{-0.10cm}D{_{A}^{\alpha}} \partial_5 \underline{L}^r(...)=0,\\
_x~\hspace{-0.10cm}D{_{B}^{\alpha}} \partial_5 \overline{L}^r(...)-_x~\hspace{-0.10cm}D{_{A}^{\alpha}} \partial_5 \overline{L}^r(...)=0,\\
\end{cases}
\]
for all $x\in[a,A]$,
\[
\begin{cases}
\partial_2\underline{L}^r(...)+ _x~\hspace{-0.10cm}D{_{B}^{\alpha}} \partial_4 \underline{L}^r(...)+ _A ~\hspace{-0.10cm}D{_{x}^{\beta}} \partial_6 \underline{L}^r(...)=0,\\
\partial_2\overline{L}^r(...)+ _x~\hspace{-0.10cm}D{_{B}^{\alpha}} \partial_4 \overline{L}^r(...)+ _A ~\hspace{-0.10cm}D{_{x}^{\beta}} \partial_6 \overline{L}^r(...)=0,\\
\partial_3 \underline{L}^r(...)+ _x\hspace{-0.10cm}D{_{B}^{\alpha}} \partial_5 \underline{L}^r(...)+_A~\hspace{-0.10cm}D{_{x}^{\beta}} \partial_7 \underline{L}^r(...)=0,\\
\partial_3 \overline{L}^r(...)+ _x\hspace{-0.10cm}D{_{B}^{\alpha}} \partial_5 \overline{L}^r(...)+_A~\hspace{-0.10cm}D{_{x}^{\beta}} \partial_7 \overline{L}^r(...)=0,\\
\end{cases}
\]
for all $x\in[A,B]$,
\[
\begin{cases}
_A~\hspace{-0.10cm}D{_{x}^{\beta}} \partial_6 \underline{L}^r(...)-_B~\hspace{-0.10cm}D{_{x}^{\beta}} \partial_6 \underline{L}^r(...)=0,\\
_A~\hspace{-0.10cm}D{_{x}^{\beta}} \partial_6 \overline{L}^r(...)-_B~\hspace{-0.10cm}D{_{x}^{\beta}} \partial_6 \overline{L}^r(...)=0,\\
_A~\hspace{-0.10cm}D{_{x}^{\beta}} \partial_7 \underline{L}^r(...)-_B~\hspace{-0.10cm}D{_{x}^{\beta}} \partial_7 \underline{L}^r(...)=0,\\
_A~\hspace{-0.10cm}D{_{x}^{\beta}} \partial_7 \overline{L}^r(...)-_B~\hspace{-0.10cm}D{_{x}^{\beta}} \partial_7 \overline{L}^r(...)=0,\\
\end{cases}
\]
for all $x\in[B,b]$. Moreover, if $\tilde{y}(a)$ is free, then
\[
\begin{cases}
\int_{A}^{B} \partial _8 \underline{L}^r(...)dx-(_x ~\hspace{-0.10cm}I{_{B}^{1-\alpha}}  \partial _4 \underline{L}^r(...)- _x\hspace{-0.10cm}I{_{A}^{1-\alpha}}  \partial _4\underline{L}^r(...))|_{x=a}=0,
\\
\int_{A}^{B} \partial _8 \overline{L}^r(...)dx-(_x ~\hspace{-0.10cm}I{_{B}^{1-\alpha}}  \partial _4 \overline{L}^r(...)- _x\hspace{-0.10cm}I{_{A}^{1-\alpha}}  \partial _4\overline{L}^r(...))|_{x=a}=0,
\\
\int_{A}^{B} \partial _9 \underline{L}^r(...)dx-(_x ~\hspace{-0.10cm}I{_{B}^{1-\alpha}}  \partial _5 \underline{L}^r(...)- _x\hspace{-0.10cm}I{_{A}^{1-\alpha}}  \partial _5\underline{L}^r(...))|_{x=a}=0,
\\
\int_{A}^{B} \partial _9 \overline{L}^r(...)dx-(_x ~\hspace{-0.10cm}I{_{B}^{1-\alpha}}  \partial _5 \overline{L}^r(...)- _x\hspace{-0.10cm}I{_{A}^{1-\alpha}}  \partial _5\overline{L}^r(...))|_{x=a}=0;
\end{cases}
\]
when $\tilde{y}(A)$ is free, then
\[
\begin{cases}
\int_{A}^{B} \partial _{10}\underline{L}^r(...)dx-(_x ~\hspace{-0.10cm}I{_{A}^{1-\alpha}}  \partial _4 \underline{L}^r(...)- _A\hspace{-0.10cm}I{_{x}^{1-\beta}}  \partial _6\underline{L}^r(...))|_{x=A}=0,
\\
\int_{A}^{B} \partial _{10}\overline{L}^r(...)dx-(_x ~\hspace{-0.10cm}I{_{A}^{1-\alpha}}  \partial _4 \overline{L}^r(...)- _A\hspace{-0.10cm}I{_{x}^{1-\beta}}  \partial _6\overline{L}^r(...))|_{x=A}=0,
\\
\int_{A}^{B} \partial _{11}\underline{L}^r(...)dx-(_x ~\hspace{-0.10cm}I{_{A}^{1-\alpha}}  \partial _5 \underline{L}^r(...)- _A\hspace{-0.10cm}I{_{x}^{1-\beta}}  \partial _7\underline{L}^r(...))|_{x=A}=0,
\\
\int_{A}^{B} \partial _{11}\overline{L}^r(...)dx-(_x ~\hspace{-0.10cm}I{_{A}^{1-\alpha}}  \partial _5 \overline{L}^r(...)- _A\hspace{-0.10cm}I{_{x}^{1-\beta}}  \partial _7\overline{L}^r(...))|_{x=A}=0;
\end{cases}
\]
when $\tilde{y}(B)$ is free, then
\[
\begin{cases}
\int_{A}^{B} \partial _{12}\underline{L}^r(...)dx-(_B ~\hspace{-0.10cm}I{_{x}^{1-\beta}}  \partial _6 \underline{L}^r(...)- _x\hspace{-0.10cm}I{_{B}^{1-\alpha}}  \partial _4\underline{L}^r(...))|_{x=B}=0,
\\
\int_{A}^{B} \partial _{12}\overline{L}^r(...)dx-(_B ~\hspace{-0.10cm}I{_{x}^{1-\beta}}  \partial _6 \overline{L}^r(...)- _x\hspace{-0.10cm}I{_{B}^{1-\alpha}}  \partial _4\overline{L}^r(...))|_{x=B}=0,
\\
\int_{A}^{B} \partial _{13}\underline{L}^r(...)dx-(_B ~\hspace{-0.10cm}I{_{x}^{1-\beta}}  \partial _7 \underline{L}^r(...)-  _x\hspace{-0.10cm}I{_{B}^{1-\alpha}}  \partial _5\underline{L}^r(...))|_{x=B}=0,
\\
\int_{A}^{B} \partial _{13}\overline{L}^r(...)dx-(_B ~\hspace{-0.10cm}I{_{x}^{1-\beta}}  \partial _7 \overline{L}^r(...)-  _x\hspace{-0.10cm}I{_{B}^{1-\alpha}}  \partial _5\overline{L}^r(...))|_{x=B}=0;
\end{cases}
\]
and when $\tilde{y}(b)$ is free, then
\[
\begin{cases}
\int_{A}^{B} \partial _{14}\underline{L}^r(...)dx-(_A ~\hspace{-0.10cm}I{_{x}^{1-\beta}}  \partial _6 \underline{L}^r(...)- _B\hspace{-0.10cm}I{_{x}^{1-\beta}}  \partial _6\underline{L}^r(...)|_{x=b}=0,
\\
\int_{A}^{B} \partial _{14}\overline{L}^r(...)dx-(_A ~\hspace{-0.10cm}I{_{x}^{1-\beta}}  \partial _6 \overline{L}^r(...)- _B\hspace{-0.10cm}I{_{x}^{1-\beta}}  \partial _6\overline{L}^r(...)|_{x=b}=0,
\\
\int_{A}^{B} \partial _{15}\underline{L}^r(...)dx-(_A ~\hspace{-0.10cm}I{_{x}^{1-\beta}}  \partial _7 \underline{L}^r(...)- _B\hspace{-0.10cm}I{_{x}^{1-\beta}}  \partial _7\underline{L}^r(...))|_{x=b}=0,
\\
\int_{A}^{B} \partial _{15}\overline{L}^r(...)dx-(_A ~\hspace{-0.10cm}I{_{x}^{1-\beta}}  \partial _7 \overline{L}^r(...)- _B\hspace{-0.10cm}I{_{x}^{1-\beta}}  \partial _7\overline{L}^r(...))|_{x=b}=0,
\end{cases}
\]
where
\begin{align*}
(...)=&(x,\underline{y}^{*r}(x),\overline{y}^{*r}(x),_a^{C}\hspace{-0.10cm}D{_{x}^{\alpha}}\underline{y}^{*r},
_a^{C}\hspace{-0.10cm}D{_{x}^{\alpha}}\overline{y}^{*r},~_x^{C}\hspace{-0.10cm}D{_{b}^{\beta}}\underline{y}^{*r},
_x^{C}\hspace{-0.10cm}D{_{b}^{\beta}}\overline{y}^{*r},\\
&\underline{y}^{*r}(a),\overline{y}^{*r}(a),\underline{y}^{*r}(A),\overline{y}^{*r}(A),
\underline{y}^{*r}(B),\overline{y}^{*r}(B),\underline{y}^{*r}(b),\overline{y}^{*r}(b)).
\end{align*}
\end{theorem}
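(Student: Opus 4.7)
The plan is to adapt the argument of Theorem~\ref{e11} to this three-region setting. The starting point is already provided: equation \eqref{N2} expresses $\frac{d\underline{J}^r}{d\epsilon}\big|_{\epsilon=0}=0$, and the four fractional integration-by-parts identities displayed immediately afterwards move the Caputo operators off of $\underline{h}^r$ and $\overline{h}^r$, at the cost of boundary terms involving fractional integrals. The key structural observation is that because the functional is integrated over $[A,B]$ but the fractional operators live on $[a,x]$ and $[x,b]$, each such identity naturally decomposes into a difference of integrals over $[a,A]$ and $[a,B]$ (for the left operator) or over $[A,b]$ and $[B,b]$ (for the right operator).

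Substituting these identities into \eqref{N2} and regrouping by integration domain yields an expression of the schematic form
\begin{align*}
0 =\ & \int_a^A E_1^r(x)\,\underline{h}^r(x)+F_1^r(x)\,\overline{h}^r(x)\,dx + \int_A^B E_2^r(x)\,\underline{h}^r(x)+F_2^r(x)\,\overline{h}^r(x)\,dx \\
& + \int_B^b E_3^r(x)\,\underline{h}^r(x)+F_3^r(x)\,\overline{h}^r(x)\,dx + \mathcal{B}^r\bigl(\underline{h}^r,\overline{h}^r;a,A,B,b\bigr),
\end{align*}
where on the middle interval the densities $E_2^r, F_2^r$ additionally contain $\partial_2\underline{L}^r, \partial_3\underline{L}^r$, while on the outer intervals only differences of fractional derivatives of $\partial_4, \partial_5, \partial_6, \partial_7\underline{L}^r$ appear (since $\partial_2 L$ and $\partial_3 L$ enter the variation only through the integral over $[A,B]$), and $\mathcal{B}^r$ collects pointwise contributions at the four distinguished points together with the integrals of $\partial_8\underline{L}^r,\ldots,\partial_{15}\underline{L}^r$.

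Applying the fundamental lemma of the calculus of variations three times, each time with $\tilde h$ compactly supported in the interior of exactly one of $[a,A]$, $[A,B]$, $[B,b]$ and vanishing at $a, A, B, b$, forces $E_i^r=F_i^r=0$, producing the three block systems of Euler--Lagrange equations for the $\underline L^r$ side. With these bulk equations in force the interior integrals cancel, and what remains is the boundary term $\mathcal{B}^r$, which is a linear combination of $\underline{h}^r(a),\overline{h}^r(a),\ldots,\underline{h}^r(b),\overline{h}^r(b)$. Whenever one of $\tilde y(a), \tilde y(A), \tilde y(B), \tilde y(b)$ is free, the corresponding variation is arbitrary, so its coefficient must vanish; this delivers the four groups of transversality conditions. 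Running the same derivation starting from $\frac{d\overline{J}^r}{d\epsilon}\big|_{\epsilon=0}=0$ yields the matching equations for $\overline L^r$.

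The main obstacle is purely bookkeeping: correctly assembling, with the right signs, the boundary contributions produced by the four integration-by-parts identities into the coefficients of $\underline{h}^r$ and $\overline{h}^r$ at each of $a, A, B, b$. In particular, at the interior points $A$ and $B$ one must check that the two partial evaluations from the split integrals over $[a,A]$ vs.\ $[a,B]$ (and $[A,b]$ vs.\ $[B,b]$) combine exactly into the transversality conditions displayed in the theorem, and that no stray $\partial_2\underline L^r$ or $\partial_3\underline L^r$ term leaks into the Euler--Lagrange equations on the outer intervals---a consequence of the fact that the Lagrangian is only integrated over $[A,B]$.
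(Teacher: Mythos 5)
Your proposal is correct and follows essentially the same route as the paper: splitting the four integration-by-parts identities into differences of integrals over $[a,A]$, $[a,B]$ (resp.\ $[A,b]$, $[B,b]$), substituting into \eqref{N2}, applying the fundamental lemma region by region to obtain the three blocks of Euler--Lagrange equations, and then using the arbitrariness of $\underline{h}^r,\overline{h}^r$ at whichever of $a,A,B,b$ is free to extract the transversality conditions, with the same derivation repeated for $\overline{J}^r$. In fact your write-up makes explicit the bookkeeping step that the paper compresses into ``doing the same computations as presented before, and by the arbitrariness of $\tilde{h}$.''
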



\section{Transversality for Fuzzy Fractional Variational Problems}\label{4}
In this section, for simplicity and without
lose of generality, we consider the following FFVP described by
\begin{equation}\label{f1}
\begin{gathered}
\tilde{J}(\tilde{y})=\int_{a}^{b} \tilde{L}(x,\tilde{y}(x),^{gH-C}\hspace{-0.10cm}_a\mathcal{D}{_{ix}^{\alpha}}\tilde{y}(x)) dx\longrightarrow \rm{extr}, \\
\tilde{y}(a)=\tilde{y}_a
\end{gathered}
\end{equation}
and $\tilde{y}(x)$ intersects the curve $\tilde{\mathit{z}}=\tilde{C}(x)$ for the first time at $b$, i.e. $\tilde{y}(b)=\tilde{C}(b)$. Here, $\tilde{C}(x)$ is a
specified curve. We implicitly assume that all differentiability conditions are met. Note that
in this problem we do not know the end point $x=b$ in advance except that the end point of
$\tilde{y}(x)$ lies on a specified curve.\par
To develop the necessary conditions for this problem, assume that $\tilde{y}^*(x)$ is the desired
function which intersects the curve $\tilde{\mathit{z}}=\tilde{C}(x)$ at $x=b^*$, i.e. $\tilde{y}^*(b^*)=\tilde{C}(b^*)$. Take $\epsilon \in \mathbb{R}$  and define a family of curves
\begin{equation}\label{f2}
\tilde{y}(x)=\tilde{y}^*(x)+\epsilon \tilde{h}(x)
\end{equation}
where $\tilde{h}(x)$ is an arbitrary fuzzy function which satisfy the boundary conditions, i.e., we require
that $\tilde{h}(a)=\tilde{0}$. The  lower bound and upper bound of  $\tilde{y}$ are
\begin{equation}\label{f2,3}
\underline{y}^r(x)=\underline{y}^{r*}(x)+\epsilon \underline{h}^r(x) ~~~{\rm and} ~~~\overline{y}^r(x)=\overline{y}^{r*}(x)+\epsilon \overline{h}^r(x).
\end{equation}
  We further define a set of end--points
\begin{equation}\label{f3}
b=b^*+\epsilon\zeta(b^*)
\end{equation}
 where $\zeta(b^*)$ is a variation in $b^*$.\par
The lower bound and upper bound of $\tilde{J}$ are
\begin{equation}\label{f2,4}
\underline{J}^r(\epsilon)=\int_{a}^{b} \underline{L}^r\left(x,~\underline{y}^r(x),~ \overline{y}^r(x),~_a^{C}\hspace{-0.10cm}D{_{x}^{\alpha}}\underline{y}^r(x),~_a^{C}\hspace{-0.10cm}D{_{x}^{\alpha}}\overline{y}^r(x)\right) dx,
\end{equation}
\begin{equation}\label{f2,5}
\overline{J}^r(\epsilon)=\int_{a}^{b} \overline{L}^r\left(x,~\underline{y}^r(x),~ \overline{y}^r(x),~_a^{C}\hspace{-0.10cm}D{_{x}^{\alpha}}\underline{y}^r(x),~_a^{C}\hspace{-0.10cm}D{_{x}^{\alpha}}\overline{y}^r(x)\right) dx.
\end{equation}
To obtain the extremum of the functional, we substitute equations (\ref{f2,3}) and (\ref{f3}) into
equations (\ref{f2,4}) and (\ref{f2,5}), find the expression for $\frac{d \underline{J}^r}{d\epsilon}$ and $\frac{d \overline{J}^r}{d\epsilon}$, and set $\epsilon,~\frac{d \underline{J}^r}{d\epsilon}$  and  $\frac{d \overline{J}^r}{d\epsilon}$ equal to $0$. This leads to
\begin{multline}\label{f5}
\int_{a}^{b^*}  \left\{\partial_2 \underline{L}^r(...)\underline{h}^r+\partial_3 \underline{L}^r(...)\overline{h}^r+\partial_4 \underline{L}^r(...) ~_a^{C}\hspace{-0.10cm}D{_{x}^{\alpha}}\underline{h}^r+\partial_5 \underline{L}^r(...)~_a^{C}\hspace{-0.10cm}D{_{x}^{\alpha}}\overline{h}^r\right\} dx\\
+\zeta(b^*)  \left[\underline{L}^r\left(b^*,\underline{y}^{r*}(b^*),\overline{y}^{r*}(b^*),_a^{C}\hspace{-0.10cm}D{_{x}^{\alpha}}\underline{y}^{r*}(b^*),_a^{C}\hspace{-0.10cm}D{_{x}^{\alpha}}\overline{y}^{r*}(b^*)\right)\right]=0
\end{multline}
and
\begin{multline}\label{f6}
\int_{a}^{b^*} \left\{\partial_2 \overline{L}^r (...)\underline{h}^r+\partial_3 \overline{L}^r (...)\overline{h}^r+\partial_4 \overline{L}^r (...)~_a^{C}\hspace{-0.10cm}D{_{x}^{\alpha}}\underline{h}^r+\partial_5 \overline{L}^r(...)~_a^{C}\hspace{-0.10cm}D{_{x}^{\alpha}}\overline{h}^r\right\}dx\\
+\zeta(b^*)\left[\overline{L}^r\left(b^*,\underline{y}^{r*}(b^*),\overline{y}^{r*}(b^*),_a^{C}
\hspace{-0.10cm}D{_{x}^{\alpha}}\underline{y}^{r*}(b^*),_a^{C}\hspace{-0.10cm}D{_{x}^{\alpha}}\overline{y}^{r*}(b^*)\right)\right]=0,
\end{multline}
where
$$(...)=(x,\underline{y}^{*r}(x),\overline{y}^{*r}(x),
_a^{C}\hspace{-0.10cm}D{_{x}^{\alpha}}\underline{y}^{*r},_a^{C}\hspace{-0.10cm}D{_{x}^{\alpha}}\overline{y}^{*r}).$$
Note that $b^*$ is still unknown. However, it is a fixed point,   therefore it can be used to
define the right fractional derivative.\par
Using integration by parts and terminal condition $\tilde{y}(a)=\tilde{y}_a$,  equation (\ref{f5}) can be written as
\begin{equation}\label{f7}
\begin{split}
\int_{a}^{b^*} &\left\{\left[\partial_2 \underline{L}^r(...)+_x\hspace{-0.10cm}D{_{b^*}^{\alpha}}\partial_4 \underline{L}^r(...)\right]\underline{h}^r+\left[\partial_3 \underline{L}^r(...)+_x\hspace{-0.10cm}D{_{b^*}^{\alpha}}\partial_5 \underline{L}^r(...)\right]\overline{h}^r\right\} dx\\&
+\left[_x~\hspace{-0.10cm}I{_{b^*}^{1-\alpha}}\partial_4 \underline{L}^r(...)\underline{h}^r(x)+_x~\hspace{-0.10cm}I{_{b^*}^{1-\alpha}}\partial_5 \underline{L}^r(...)\overline{h}^r(x)\right]|_{x=b^*}\\&
+\zeta(b^*).\left(\underline{L}^r(b^*,~\underline{y}^{r*}(b^*),~\overline{y}^{r*}(b^*),~_a^{C}\hspace{-0.10cm}D{_{x}^{\alpha}}\underline{y}^{r*}(b^*),~_a^{C}\hspace{-0.10cm}D{_{x}^{\alpha}}\overline{y}^{r*}(b^*))\right)=0.
\end{split}
\end{equation}
Since the right end point lies on the curve $\tilde{\mathit{z}}=\tilde{C}(x)$,  we have from equations (\ref{f2}) and (\ref{f3}),
\begin{equation}
\tilde{y}^*(b^*+\epsilon\zeta(b^*))+\epsilon\tilde{h}(b^*+\epsilon\zeta(b^*))=\tilde{C}(b^*+\epsilon\zeta(b^*))
\end{equation}
with $r$--level set $[\underline{y}^{r*},~\overline{y}^{r*}]$ where
\begin{equation}\label{f8}
\underline{y}^{r*}(b^*+\epsilon\zeta(b^*))+\epsilon\underline{h}^{r}(b^*+\epsilon\zeta(b^*))=\underline{C}^r(b^*+\epsilon\zeta(b^*)),
\end{equation}
\begin{equation}\label{f9}
\overline{y}^{r*}(b^*+\epsilon\zeta(b^*))+\epsilon\overline{h}^{r}(b^*+\epsilon\zeta(b^*))=\overline{C}^r(b^*+\epsilon\zeta(b^*)).
\end{equation}
Differentiating equations (\ref{f8}) and (\ref{f9}) with respect to $\epsilon$
 and then setting $\epsilon=0$, we obtain
\begin{equation}\label{f10}
\underline{h}^r(b^*)=\zeta(b^*).\left(D\underline{C}^{r}(b^*)-D\underline{y}^{r*}(b^*)\right),
\end{equation}
\begin{equation}\label{f11}
\overline{h}^r(b^*)=\zeta(b^*).\left(D\overline{C}^{r}(b^*)-D\overline{y}^{r*}(b^*)\right).
\end{equation}
Using equations (\ref{f7}),  (\ref{f10}) and (\ref{f11}), the generalized Euler--Lagrange equations and the transversality
conditions can be written, respectively, as
\begin{equation}\label{f14}
\partial_2 \underline{L}^r (...)+_x\hspace{-0.10cm}D{_{b^*}^{\alpha}}\partial_4 \underline{L}^r(...)=0,
\end{equation}
\begin{equation}\label{f15}
\partial_3 \underline{L}^r(...)+_x\hspace{-0.10cm}D{_{b^*}^{\alpha}}\partial_5 \underline{L}^r(...)=0,
\end{equation}
for all $x\in[a,b^*]$, and
\begin{multline*}
\zeta(b^*).\left[\left(
  _x~\hspace{-0.10cm}I{_{b^*}^{1-\alpha}}\partial_4 \underline{L}^r(...)\right).(D\underline{C}^r-D\underline{y}^{r*})\right.\\\left.
+\left(_x~\hspace{-0.10cm}I{_{b^*}^{1-\alpha}}\partial_5 \underline{L}^r(...)\right)(D\overline{C}^r-D\overline{y}^{r*})+\underline{L}^r\right]=0,~~~~\rm{for}~~~~~~~~~~~\it{x=b^*}.
\end{multline*}

In general $\zeta(b^*)$ is not $0$,  therefore the coefficient of $\zeta(b^*)$ in above equation  must be $0$.
This gives the following transversality condition  for the problem
\begin{multline}\label{f13}
\left(_x~\hspace{-0.10cm}I{_{b^*}^{1-\alpha}}\partial_4 \underline{L}^r(...)\right).(D\underline{C}^r-D\underline{y}^{r*})\\
+\left(_x~\hspace{-0.10cm}I{_{b^*}^{1-\alpha}}\partial_5 \underline{L}^r(...)\right).(D\overline{C}^r-D\overline{y}^{r*})+\underline{L}^r=0,~~~~\rm{for}~~~~\it{x=b^*}.
\end{multline}
Following the scheme of obtaining (\ref{f14})-(\ref{f13}) and adapting it to the case under consideration involving (\ref{f6}), one can show that
\begin{equation}\label{f16}
\partial_2\overline{L}^r(...)  +_x\hspace{-0.10cm}D{_{b^*}^{\alpha}}\partial_4 \overline{L}^r(...)=0,
\end{equation}
\begin{equation}\label{f17}
\partial_3 \overline{L}^r(...)+_x\hspace{-0.10cm}D{_{b^*}^{\alpha}}\partial_5 \overline{L}^r(...)=0,
\end{equation}
\begin{multline}\label{f18}
\left(_x~\hspace{-0.10cm}I{_{b^*}^{1-\alpha}}\partial_4 \overline{L}^r(...)\right).(D\underline{C}^r-D\underline{y}^{r*})\\
+\left(_x~\hspace{-0.10cm}I{_{b^*}^{1-\alpha}}\partial_5 \overline{L}^r(...)\right).(D\overline{C}^r-D\overline{y}^{r*})+\overline{L}^r=0,~~~~\rm{for}~~~~~~~~~~~~\it{x=b^*}.
\end{multline}
Eqs.(\ref{f13}) and (\ref{f18}) are called transversality conditions.


\begin{example}\label{exa3}
 Consider the following fuzzy fractional variational problem:
\begin{equation*}
\begin{gathered}
\tilde{J}(\tilde{y}):=\int_{1}^{b} (^{gH-C}\hspace{-0.10cm}_1\mathcal{D}{_{ix}^{\alpha}} \tilde{y})^2 x^3dx \longrightarrow \min,\\ \tilde{y}(1)=<-1,0,1>
\end{gathered}
\end{equation*}
where $b$ is finite and $\tilde{y}(b)$  lies on the curve  $\tilde{y}=\frac{\tilde{2}}{x^{2}}-\tilde{3}$, given that $\tilde{2}=<1,2,3>$ and $\tilde{3}=<2,3,4>$.
\end{example}
$Solution$.
Suppose that $\tilde{y}(x)$ is $[(1) - gH]$-differentiable function. The $r$-level set of $\tilde{J}$
is:
$$[\tilde{J}(\tilde{y})]^r=\left[\int_{1}^{b} (_{1}^C~ \hspace{-0.10cm}D{_{x}^{\alpha}}\underline{y}^r)^2 x^3 dx,~\int_{1}^{b} (_{1}^C~ \hspace{-0.10cm}D{_{x}^{\alpha}}\overline{y}^r)^2 x^3 dx\right].$$ The objective curve is
$\tilde{C}(x)=\frac{\tilde{2}}{x^{2}}-\tilde{3}$  where $\underline{C}^r=\frac{r+1}{x^2}-4+r$ and $\overline{C}^r=\frac{3-r}{x^2}-4-r$, so we have
$D\underline{C}^r=\frac{-2(r+1)}{x^3}$, and $D\overline{C}^r=\frac{-2(3-r)}{x^3}$.\par
From fuzzy Euler--Lagrange equations and transversality conditions in (\ref{f14})-(\ref{f18}) we have
\begin{equation}\label{f19}
\begin{gathered}
_{x}~\hspace{-0.10cm}D{_{b}^{\alpha}} \left(2 (_{1}^C~\hspace{-0.10cm}D{_{x}^{\alpha}}\underline{y}^r) x^3\right)=0,\\
_{x}~\hspace{-0.10cm}D{_{b}^{\alpha}} \left(2( _{1}^C~\hspace{-0.10cm}D{_{x}^{\alpha}}\overline{y}^r) x^3\right)=0,\\
\left(_x~\hspace{-0.10cm}I{_{b}^{1-\alpha}}(2~ _1^{C}\hspace{-0.10cm}D{_{x}^{\alpha} \underline{y}^r})x^3\right).\left(\frac{-2(r+1)}{x^3}-D\underline{y}^r\right)
+(_{1}^C~ \hspace{-0.10cm}D{_{x}^{\alpha}}\underline{y}^r)^2. x^3=0,~~~~\rm{for}~~~~\it{x=b^*},\\
\left(_x~\hspace{-0.10cm}I{_{b}^{1-\alpha}}(2 ~_1^{C}\hspace{-0.10cm}D{_{x}^{\alpha} \overline{y}^r})x^3\right).\left(\frac{-2(r+1)}{x^3}-D\overline{y}^r\right)
+(_{1}^C~ \hspace{-0.10cm}D{_{x}^{\alpha}}\overline{y}^r)^2 x^3=0,~~~~\rm{for}~~~~\it{x=b^*}.
\end{gathered}
\end{equation}
Eqs. (\ref{f19}) must be solved to find the solution to the problem. If we set $\alpha=1,$ then
\begin{equation*}
\begin{gathered}
-\frac{d}{dx} (2 \dot{\underline{y}}^r x^3)=0,
\\
-\frac{d}{dx} (2 \dot{\overline{y}}^r x^3)=0.
\end{gathered}
\end{equation*}
 By virtue of the
classical differential equation theory, we may solve it analytically for fixed $r\in[0,1]$ to arrive at
$$\underline{y}^r=\frac{k_{1}}{x^2}+r-1-k_{1},~\overline{y}^r=\frac{k_{2}}{x^2}+1-r-k_{2},$$  so
$$\dot{\underline{y}}^r=-\frac{2k_{1}}{x^3},~\dot{\overline{y}}^r=-\frac{2k_{2}}{x^3}.$$\par
From transversality conditions (\ref{f13}) and (\ref{f18})
\begin{equation*}
-\frac{4k_{1}}{b^3}b^3\left(-\frac{2(r+1)}{b^3}+\frac{2k_{1}}{b^3}\right)+(-\frac{2k_{1}}{b^3})^2.b^3=0,
\end{equation*}
\begin{equation*}
-\frac{4k_{2}}{b^3}b^3\left(-\frac{2(3-r)}{b^3}+\frac{2k_{2}}{b^3}\right)+(-\frac{2k_{2}}{b^3})^2.b^3=0.
\end{equation*}
Since $b$ is finite we have
$k_{1}=2(r+1)$ and $k_{2}=2(3-r)$.
Hence, we arrive at the extremum  $\tilde{y}(x)$ where $$[\tilde{y}(x)]^r=\left[\frac{2(r+1)}{x^2}-r-3, \frac{2(3-r)}{x^2}+r-5\right]$$ and intersects $\tilde{y}=\frac{\tilde{2}}{x^{2}}-\tilde{3}$ at
$b=\sqrt{2}$.
This solution is shown in Figure \ref{fig3}.

\begin{figure}[tbp]
\centering \includegraphics[scale=.50]{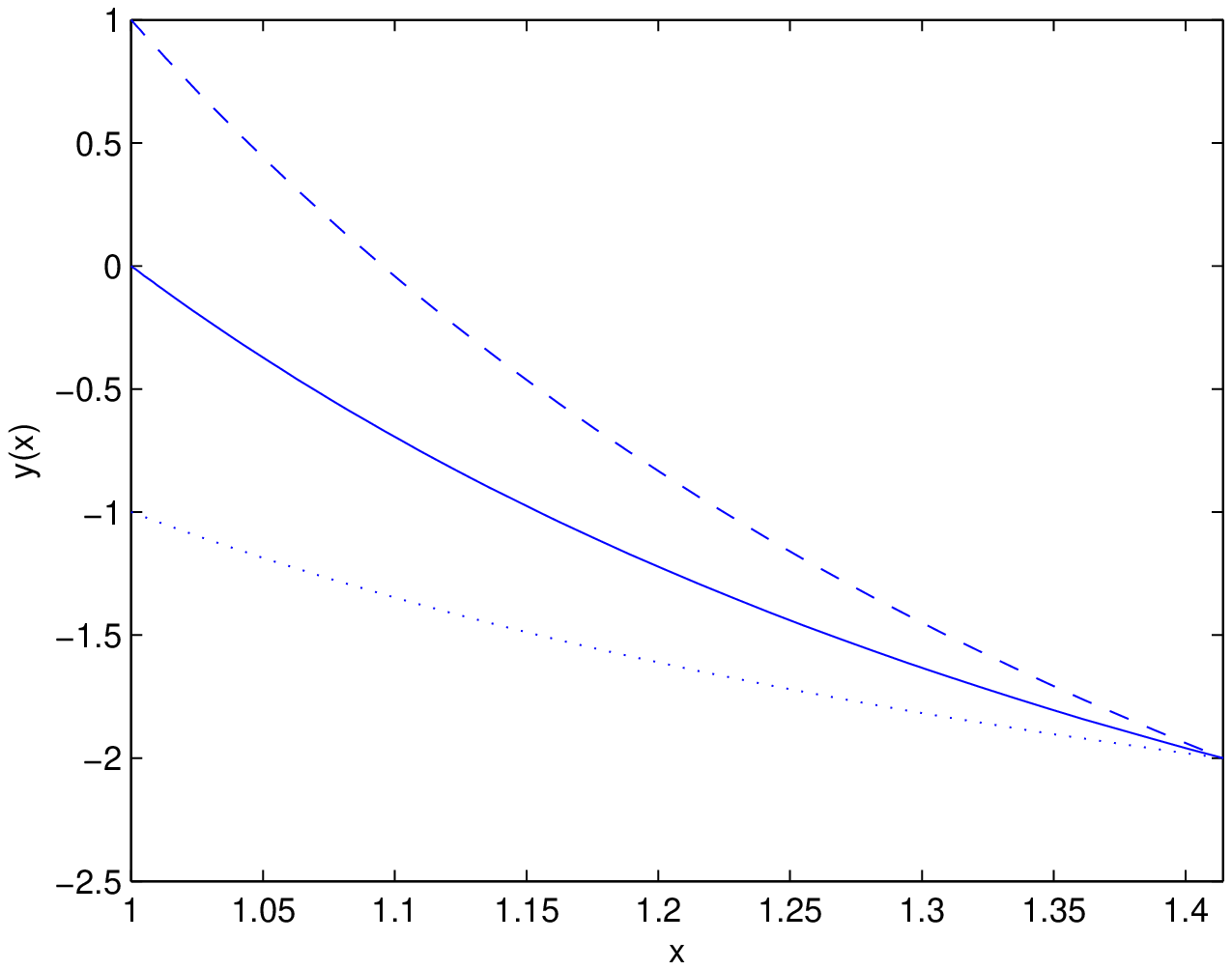}
\caption{solution of Example \ref{exa3}} $\underline{y}^0(x)$ (dotted line), $\overline{y}^0(x)$   (dashed line),~$\underline{y}^1(x)=\overline{y}^1(x)$ (solid line).\label{fig3}
\end{figure}


\section{Conclusion}\label{6}
In this paper we considered a new class of fuzzy fractional functionals of the calculus of
variations containing fuzzy fractional derivatives Liouville--Caputo senses. We  provided necessary optimality conditions for such variational functionals. The Liouville--Caputo  derivative  demands higher conditions
of regularity for differentiability: to compute the fractional derivative of a function in the Liouville--Caputo
sense, we must first calculate its derivative. Liouville--Caputo derivatives are defined only for differentiable
functions while functions that have no first order derivative might have fractional derivatives of
all orders less than one in the Riemann--Liouville sense. In a future paper, we aim to investigate this class of problems in the sense of fuzzy Riemann--Liouville derivative.


\section*{Acknowledgments} 

R. Almeida was supported by Portuguese funds through the CIDMA - Center for Research and Development in Mathematics and Applications, and the Portuguese Foundation for Science and Technology (FCT-Funda\c{c}\~ao para a Ci\^encia e a Tecnologia), within project PEst-OE/MAT/UI4106/2014.




\end{document}